\documentclass[11pt,a4paper]{article}
\usepackage[utf8]{inputenc}
\usepackage[T2A]{fontenc}
\usepackage{amsmath}
\usepackage{amsfonts}
\usepackage{amssymb}
\usepackage{amsthm}
\usepackage[left=2cm, right=2cm, top=2cm, bottom=2.5cm]{geometry}
\usepackage{microtype}
\usepackage{graphicx}
\usepackage{float}
\usepackage{tikz-cd}
\usepackage{csquotes}
\usepackage{hyperref}

\usepackage[maxbibnames=99, style=alphabetic]{biblatex}
\author{Chebasov Dmitry}
\title{Generating functions of hypermaps and~{$m$}-constellations with two alternating boundaries}
\date{}

\newtheorem{theorem}{Theorem}
\newtheorem{lemma}[theorem]{Lemma}
\newtheorem{corollary}[theorem]{Corollary}

\theoremstyle{definition}
\newtheorem{definition}[theorem]{Definition}

\newtheorem{example}[theorem]{Example}

\theoremstyle{remark}
\newtheorem{remark}[theorem]{Remark}

\newcommand{\mat}[1]{\begin{pmatrix}#1\end{pmatrix}}
\newcommand{\varchange}[1]{\left|\begin{array}{c}#1\end{array}\right|}

\begin{document}

\maketitle

\begin{abstract}
    We study the enumeration of planar hypermaps with several alternating boundaries. We first derive a peeling recursion with an arbitrary number of alternating boundaries and, in the two-boundary case, translate it into a functional equation for the corresponding generating function. Our main result is an explicit solution of this equation for \(m\)-constellations.
    In particular, we obtain a rational expression for the genus-zero two-point function, or cylinder generating function, with two alternating boundaries.
\end{abstract}

\section{Introduction}

In this work, we consider hypermaps with boundary conditions. Geometrically speaking, a hypermap is a map (also known as a ribbon graph or a graph on a surface) with properly colored black and white faces (vertices in the dual picture). For ease of enumeration, some faces of one color can be designated as ``outer faces'' and have one of their edges marked as roots. The generating series that enumerate these rooted hypermaps are well-understood (see, for example, \cite{Kazarian_2015} for explicit expressions and an exposition of their integrability and topological recursion properties). 

We may instead consider a more general object: hypermaps with arbitrary boundary conditions. The usual hypermaps are properly colored, and so, if we designate some white faces as outer faces, all inner faces adjacent to the boundary must be black. In hypermaps with boundary condition, we assign to each boundary a word of two symbols that encodes which colors are allowed to be adjacent to the corresponding boundary edge. 

Some results on the enumeration of such objects have appeared in the literature for a while (see, for example, Chapter~8 of~\cite{eynard2016counting}). More recently, in~\cite{bouttier2021}, significant progress was made in the study of hypermaps with one boundary with alternating condition. A further restriction to~{$m$}-constellations, a particular subclass of hypermaps, provided especially nice results, including a rational parametrization of their generating function.

Since then, some further advances in the enumeration of hypermaps with one alternating boundary were made in~\cite{baillard2026} and~\cite{bouttier2026}. In this work, we advance the study of alternating boundary hypermaps with two or more borders. In particular, we give a simple recursive formula (Lemma~\ref{lem:main_recursion}) for the coefficients of their generating series, then establish a functional equation for the case of two boundaries (Theorem~\ref{thm:FE_g0n2}), and then solve it for the case of~{$m$}-constellations (Theorem~\ref{thm:a02m_solution}). In other words, we make advances in the vein of~\cite{bouttier2021} for~{$m$}-constellations with more than one boundary.

\textbf{Acknowledgements.} The author would like to thank his~advisor, Boris Bychkov, for his support throughout all stages of~this work, and Maxim Kazarian for the many valuable discussions.

\section{Hypermaps with boundary conditions}

In this section, we shall give precise definitions of our main objects, hypermaps with boundary conditions. We shall also introduce their generating functions and our main tool: a recursion that computes the generating functions of hypermaps with alternating boundary (Lemma~\ref{lem:main_recursion}).

Note that in this text we adhere to the convention where certain faces are marked and considered boundaries. In some texts on hypermaps, a dual picture is used where certain vertices are marked. Our definition closely follows the one given in Chapter~8 of~\cite{eynard2016counting}.

\begin{definition}
A \textbf{hypermap with boundary condition} (of genus {$g$} with~{$n$} roots) is the following data:
\begin{itemize}
    \item An embedded graph (map) on a surface of genus {$g$};
    \item Exactly~{$n$} of the edges of our map are marked as roots (in order). The face to the right of the root is considered an outer face. The edges of the outer face form its boundary, and the boundary is ordered starting with the root. The roots are subject to the following:
    \begin{itemize}
	    \item Outer faces are allowed to have any degree, 
	    \item Roots are numbered {$1$} to~{$n$}. 
    \end{itemize}
	\item Each boundary edge is marked with~{$\circ$} (white) or~{$\bullet$} (black). We assign to each boundary its sequence of symbols and call that sequence its type. The most notable types are:
	\begin{itemize}
        \item Alternating boundary (AB) of type {$k$}: {$\underbrace{\bullet\circ\bullet\circ\ldots \bullet\circ}_{2k}$}. 
		\item Black and white boundaries: {$\bullet\ldots\bullet$}, {$\circ\ldots \circ$}. 
		\item Mixed boundary of type {$(p, r)$}: {$\underbrace{\circ\ldots \circ}_{p}\underbrace{\bullet\circ\ldots \bullet\circ}_{2r}$}.
    \end{itemize}
    \item Each non-outer (inner) face is marked with~{$\circ$} or~{$\bullet$}. 
    \item The coloring of the map is proper. That is, a white face may be edge-adjacent to a black face or an outer face via a black boundary and vice versa.
\end{itemize}
\end{definition} 

\begin{figure}[ht]
    \centering
    \includegraphics[width=0.4\linewidth]{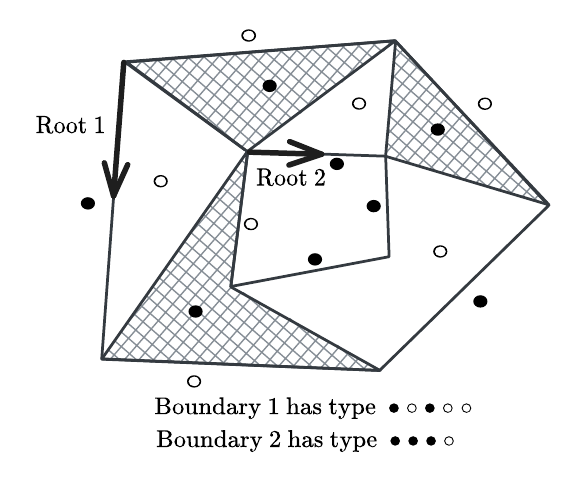}
    \caption{An example of a hypermap with~{$n=2$}, {$g=0$}.}
\end{figure}

A hypermap with boundary condition and with all white (black) monochromatic boundaries is the same as a hypermap with white (black) roots. 

Our goal is to study the generating functions that enumerate the hypermaps with all alternating boundaries (\textbf{alternating boundary hypermaps} or simply \textbf{AB hypermaps}). 

From now on, we will be working with the following classes of boundary conditions:
\begin{itemize}
    \item AB-hypermaps of signature {$\bar r=(r_1, \ldots, r_n)$}: all boundaries are alternating of type {$r_i$}. The letter {$A$} will be used for their generating functions.
    \item First-mixed hypermaps of signature {$(p, \bar{r}) = (p, (r_1, r_2, \ldots, r_n))$}: the first boundary is of mixed type {$(p, r_1)$} and the rest are alternating type {$r_i$}. The letter {$M$} will be used for their generating functions.
    \item Pure-first-mixed hypermaps of signature {$(p, \bar r) = (p, (r_2, \ldots, r_n))$}: the first boundary is white of type {$p$} and the rest are alternating type {$r_i$}. In other words, they are first-mixed of signature {$(p, (0, r_2,\ldots, r_n))$}. The letter {$P$} will be used for their generating functions.
    \item White boundary hypermaps, or simply hypermaps. The letter {$H$} will be used for their generating functions. When~{$n=1$}, note that~{$P^{(0,1)}_{p} = H^{(0,1)}_p$} by definition.
\end{itemize}

For all of the above classes of hypermaps, we introduce their generating functions. First, we introduce the series
\begin{equation}
    M^{(g,n)}_{p, \bar r} = \sum_{\Gamma} t^v \prod_{i\geq 1} t^{l_i} \tilde t^{\tilde l_i}.
\end{equation}
Here~{$\Gamma$} runs over all first-mixed hypermaps of genus {$g$}, with~{$n$} boundaries and of signature {$(p, \bar r)$}. The numbers {$v,l_i, \tilde l_i$} are the vertex count, the black inner face of degree {$i$} count and the white inner face of degree {$i$} count of~{$\Gamma$}, respectively.

These series account for all types of hypermaps we use in this work. From them we construct the big generating series for first-mixed hypermaps:
\begin{gather}\label{eq:m_def}
    M^{(g,n)}(w_1, \ldots, w_n, x) = \sum_{p+r_1\geq 1, r_2, \ldots, r_n \geq 1} \frac{\prod_{i=1}^n w_i^{r_i}}{x^{p+1}}M_{p,\bar r}^{(g,n)}, \\
    M^{(0,1)} (w,x)= \sum_{p,r \geq 0} w^r x^{-1-p} M^{(0,1)}_{p,(r)}. 
\end{gather}

In \eqref{eq:m_def} the variables $p, r_1$ range from $0$, the same holds in the rest of the text.

We define~{$A^{(g,n)}(w_1, \ldots, w_n), P^{(g,n)}(w_2,\ldots, w_n,x), H^{(g,n)}(x_1, \ldots, x_n)$} in a similar way with the same meanings of~{$w_i,x$}:
\begin{gather}
    A^{(g,n)}(w_1, \ldots, w_n)= \sum_{r_1, \ldots, r_n \geq 1} \prod_{i=1}^n w_i^{r_i}M_{0,\bar r}^{(g,n)}, \\
    P^{(g,n)}(w_2, \ldots, w_n, x) = \sum_{p, r_2, \ldots, r_n \geq 1} \frac{\prod_{i=2}^n w_i^{r_i}}{x^{p+1}}M_{p,(0, r_2, \ldots)}^{(g,n)}, \\
    H^{(g,1)}(x)= \sum_{p \geq 1} \frac{1}{x^{p+1}}M_{p,0}^{(g,1)}, \\
    A^{(0,1)}(w)=\sum_{r\geq 0} w^r M^{(0,1)}_{0,(r)},\quad H^{(0,1)}(x) = \sum_{p\geq 0} x^{-1-p} M^{(0,1)}_{p,(0)}.
\end{gather}

\begin{example}
    The series {$M^{(1,1)}_{2,(2)}$} counts hypermaps on a torus with one boundary of mixed type {$(2, 2)$}. That is, their boundary is colored like this: {$(\circ\circ\bullet\circ\bullet\circ)$}. It appears as a slice of~{$M^{(1,1)}$}:
\begin{equation}
    M^{(1,1)}(w_1, x) = M^{(1,1)}_{1,(0)}\frac{1}{x^{2}} + \ldots + M^{(1,1)}_{2,(2)}\frac{w_1^2}{x^{3}}+ \ldots.
\end{equation}

\end{example}

\begin{example}
    We shall later encounter the sum 
    \begin{equation}
    \sum_{p\geq 1, r_1\geq 0, r_2\geq 1} M^{(0,2)}_{p, (r_1, r_2)} w_1^{r_1} w_2^{r_2} x^{-1-p}.
    \end{equation}
    It differs from~{$M^{(0,2)}$} since it excludes the hypermaps with the first alternating boundary, so it is equal to~{$M^{(0,2)} - \frac{A^{(0,2)}}{x}$}.
\end{example}

We shall count a hypermap consisting of a single vertex and a single face as a hypermap with empty signature. It is the only contribution to
\begin{equation}
M^{(0,1)}_{0,(0)} = H^{(0,1)}_{0} = A^{(0,1)}_{0} = t.
\end{equation}

These conventions are based on those used in~\cite{bouttier2021}. Lemma~\ref{lem:main_recursion} below is a generalization of Lemma~1 in~\cite{bouttier2021} to the case of multiple boundaries. Note, however, that our~{$A^{(0,1)}$} is shifted by~{$w^{-1}$} compared to~\cite{bouttier2021}.

Note that~{$A^{(g,n)}$} are obviously symmetric with respect to~{$w_i$}.

Now we introduce two operations, peeling and white boundary erasing, that will allow us to construct a recursion reducing the computation of~{$M^{(0,n)}$} (and, as a consequence, {$A^{(0,n)}$}) to the computation of~{$H^{(0,1)}$}, which is well known.

\begin{definition}
    The \textbf{peeling at the first black edge} (or simply \textbf{peeling}) is an operation defined on hypermaps with at least one black edge on its first boundary. It assigns to a given hypermap {$\Gamma$} a hypermap that is the same as~{$\Gamma$} but with the first {$\bullet$} edge along its first boundary removed.

    The \textbf{white boundary erasure} is an operation defined on pure-first-mixed hypermaps. It assigns to a hypermap of this type the same hypermap, but with the first outer face replaced with an inner white face and the numbers of the rest of the outer faces shifted down by one. It maps hypermaps counted by~{$P^{(g,n)}_{p,\bar r}$} to those counted by~{$A^{(g,n-1)}_{\bar r}$}.  
\end{definition}

\section{Genus 0}

\begin{lemma}[Peeling of mixed boundary hypermaps]\label{lem:main_recursion}
    Peeling defines a bijection that results in an equation:
    \begin{align}
        M^{(0,n)}_{p,\bar r} = &\sum_{L\geq 1}t_L M^{(0,n)}_{p+L, (r_1-1, r_2, \ldots, r_n)} + \\
        +& \sum_{j = 1}^{p}\sum_{I\sqcup J= [[2,n]]}M^{(0,|I|+1)}_{j, (r_1-1,\pi_I(\bar r))} M^{(0,|J|+1)}_{p-j, (0,\pi_{J}(\bar r))} + \notag \\
        +& \sum_{s = 1}^{r_1}\sum_{I\sqcup J= [[2,n]]}M^{(0,|I|+1)}_{p, (r_1-s,\pi_I(\bar r))} M^{(0,|J|+1)}_{0, (s-1,\pi_{J}(\bar r))} + \notag \\
        +& \sum_{k=2}^n r_k\cdot  M^{(0,n-1)}_{p,(r_1 + r_k-1, r_2, \ldots, \widehat{r_k}, \ldots, r_n)}, \notag
    \end{align}

    where: 
    \begin{itemize}
        \item {$[[2,n]] = \{2,3,\ldots, n\}$};
        \item {$\pi_I : \mathbb{Z}_{\geq0}^{n} \to \mathbb{Z}_{\geq 0}^{|I|}$} is the projection onto basis vectors indexed by~{$I$}. E.g. {$\pi_{\{1,3\}}(a,b,c)=(a,c)$}. 
    \end{itemize}

    To account for the case where one of the boundaries is of length zero, we should treat~{$M^{(0,|J|+1)}_{0,(0, \pi_J(\bar r))}$} as zero in cases where~{$\pi_J(\bar r) \neq \emptyset$}. 
\end{lemma}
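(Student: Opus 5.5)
Let $e$ be the first $\bullet$ edge on the first boundary of a hypermap $\Gamma$ counted by $M^{(0,n)}_{p,\bar r}$; it exists because $r_1\ge 1$. Since the boundary label is $\bullet$, the face on the other side of $e$ is either an inner black face or a second outer face adjacent along this edge. We classify hypermaps by what lies across $e$ and by the topology after removing $e$. In each case the peeled map is a genus-$0$ hypermap whose boundary word is read off by splicing the boundary around $e$. The first boundary is $\underbrace{\circ\cdots\circ}_p\bullet\circ\bullet\circ\cdots$, and the convention that $e$ is the first $\bullet$ fixes where the splice happens. The plan is to exhibit in each case an explicit inverse (re-gluing an edge), which makes peeling a bijection onto a disjoint union of classes. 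Matching each case to a term gives the weights, since removing $e$ leaves $v$ and the inner faces of the other colour unchanged.

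\textbf{Case 1: across $e$ is an inner black face of degree $L$.} Removing $e$ merges that face into the outer face. The $L-1$ remaining edges of the black face become new boundary edges labelled $\circ$, because they were adjacent to black. I would check the cyclic word: the root is re-chosen so that the new boundary reads $\circ^{p+L}$ followed by $\bullet\circ$ repeated $r_1-1$ times. The count is $p+(L-1)$ plus the $\circ$ that followed $e$, which gives $p+L$. This produces the term $t_L M^{(0,n)}_{p+L,(r_1-1,\dots)}$. The inverse inserts a black $L$-gon, so the correspondence is a bijection.

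\textbf{Case 2: across $e$ is the first outer face itself, so $e$ is a bridge-like self-adjacency.} Removing $e$ disconnects the planar map into two components, each carrying one piece of the first boundary. The other boundaries distribute as $I\sqcup J=[[2,n]]$, and genus $0$ forces both pieces to be planar. The boundary splits into two arcs at the two sides of $e$. If the second endpoint lies inside the initial white run, at position $j$, one piece is $\circ^{j}$ followed by the remaining alternating part, and the other is pure white $\circ^{p-j}$; this is the second line. If the second endpoint lies in the alternating part after $s$ pairs, one piece has type $(p,r_1-s)$ and the other has alternating type $s-1$; this is the third line. The root of each piece is placed canonically, at the successor of $e$. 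Parity of colours rules out other splice positions, because an edge glued to itself must join a $\bullet$ position to a matching position. Degenerate empty pieces account for the stated convention that $M_{0,(0,\pi_J)}=0$ unless $J=\emptyset$, in which case the piece is the single-vertex map of weight $t$.

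\textbf{Case 3: across $e$ is outer face $k\ge2$.} The edge $e$ must sit at a $\circ$ side of boundary $k$, because properness requires it to be adjacent across a black-labelled edge. Removing $e$ merges boundaries $1$ and $k$ into a single boundary of mixed type $(p,r_1+r_k-1)$. There are $r_k$ choices for which $\circ$ edge of boundary $k$ was glued, corresponding to rotations of boundary $k$. Hence the inverse map from $M^{(0,n-1)}$ is $r_k$-to-one, giving the last term.

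The main obstacle is the bookkeeping in Case 2. One has to verify precisely which splice positions are allowed by the colour constraints, that rooting conventions make the map bijective (not $2$-to-$1$), and that the boundary cases $j=p$, $s=1$ and the empty map are handled consistently with the $t$ convention. I would check these by drawing the boundary word and tracking both arcs explicitly.
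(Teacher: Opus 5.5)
Your proposal is correct and follows essentially the same route as the paper: you classify by what lies across the first $\bullet$ edge (a black $L$-gon, the first boundary itself with the other side of $e$ either in the initial white run or in the alternating part, or boundary $k$), which is exactly the paper's four cases, with your two sub-cases of Case 2 being its cases 2 and 3. One small slip is that the roots of the pieces are dictated by the required boundary words, so not all of them can be ``the successor of $e$''. For instance, the alternating remnant $(\bullet\circ)^{s-1}$ must be rooted at a $\bullet$ edge rather than at the $\circ$ edge following $e$, and in Case 3 it is the peeling map, not its inverse, that is $r_k$-to-one.
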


\begin{proof}
    Each line of the formula corresponds to one of the four possible cases of what can be behind the peeled edge:
    \begin{enumerate}
        \item A black face of degree {$L$}. 
        \item The first boundary, and the other side has already been traversed.
        \item The first boundary, and the other side has not been traversed.
        \item A~{$k$}-th boundary.
    \end{enumerate}
    Cases 1 and 4 are straightforward: the revealed face is merged into the first boundary. Cases 2 and 3, where our hypermap splits into two, are handled in Figure~\ref{fig:peeling}.
    \begin{figure}[ht]
    \centering
    \includegraphics[width=\linewidth]{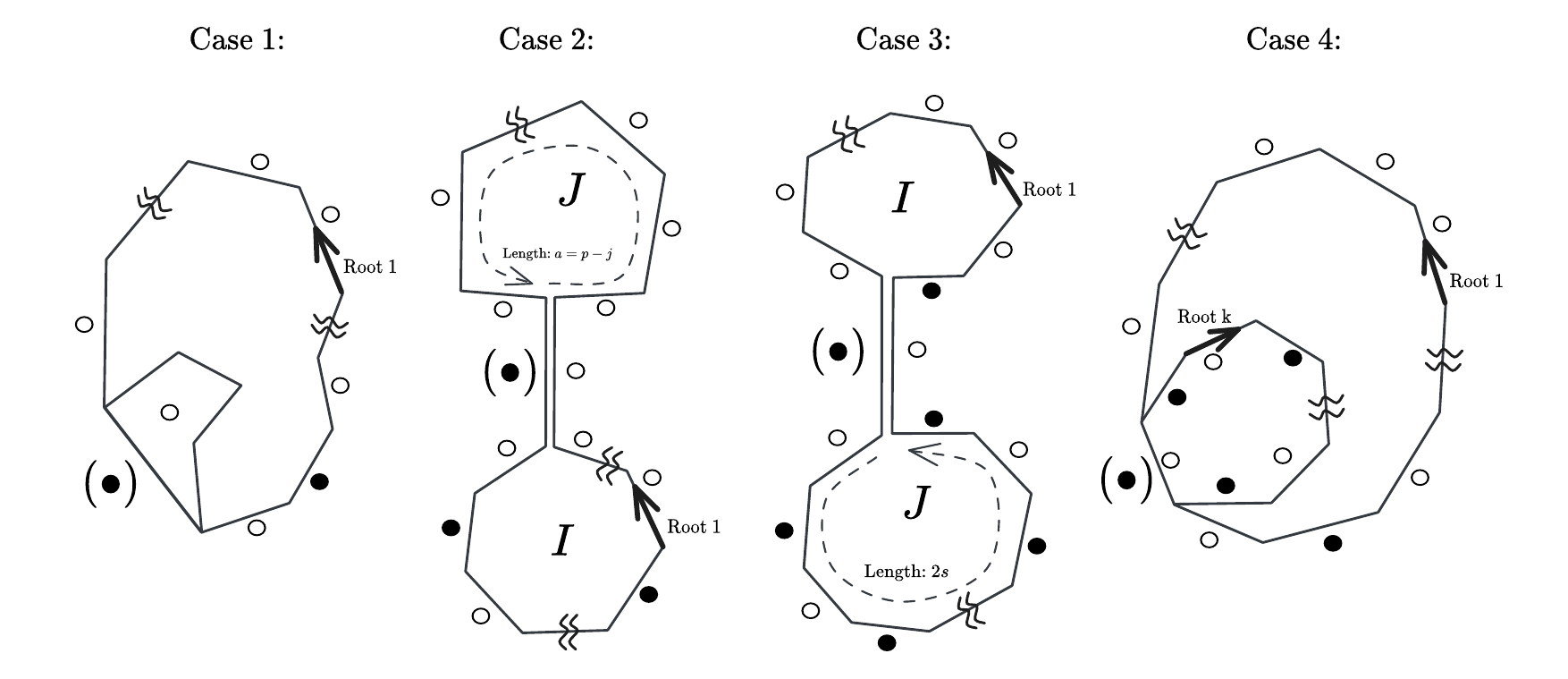}
    \caption{Peeling the first {$\bullet$}.}
    \label{fig:peeling}
    \end{figure}
\end{proof}

\begin{remark}\label{rem:genus}
    In the case of~{$g\geq 1$}, a similar recursion holds. Only in the case of higher genus should an extra term be introduced. It corresponds to the case where the peeled edge makes a full circle around a handle. 
\end{remark}

\begin{example}
    By setting~{$g=0$} and~{$n=1$}, we recover
    \begin{align}
        M^{(0,1)}_{p, r} = &\sum_{L\geq 1}t_L M^{(0,1)}_{p+L, r - 1} + \\
        +& \sum_{j = 1}^{p}M^{(0,1)}_{j, (r_1-1)} M^{(0,1)}_{p-j, (0)} + \sum_{s = 1}^{r_1}M^{(0,1)}_{p, (r_1-s)} M^{(0,1)}_{0, (s-1)}.\notag
    \end{align}
    Up to a change of summation variables this is the peeling recursion of~\cite{bouttier2021}.
\end{example}

\begin{lemma}[White boundary erasure of pure-first-mixed hypermaps]\label{lem:erase}
    Erasing the first boundary of a pure-first-mixed boundary hypermap results in the following equation for~{$n>1$}:
    \begin{equation}
        P^{(0,n)}_{p, (r_1,\ldots, r_{n-1})}= p\cdot\partial_{t_p}  A^{(0,n-1)}_{(r_1, \ldots, r_{n-1})}.
    \end{equation} 
\end{lemma}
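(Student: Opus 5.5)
We prove the identity by a weighted bijection between two sets of hypermaps. On one side are the pure-first-mixed hypermaps counted by \(P^{(0,n)}_{p,(r_1,\ldots,r_{n-1})}\). On the other side are the AB hypermaps counted by \(A^{(0,n-1)}_{(r_1,\ldots,r_{n-1})}\), with one distinguished white inner face of degree \(p\) and one distinguished edge on it. The factor \(p\) comes from choosing that edge. The operator \(\partial_{t_p}\) lowers the exponent of \(t_p\) by one and multiplies by the number of available faces. So \(p\,\partial_{t_p}A\) counts exactly these doubly marked objects, provided the weight of a marked face of degree \(p\) is removed, since after erasure it no longer contributes a factor \(t_p\). (Here \(t_p\) is understood as the weight of white faces of degree \(p\); the notation \(\tilde t_p\) versus \(t_p\) must be matched with the convention in the definition of \(M\).)

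\textbf{Forward map (white boundary erasure).} Start from a pure-first-mixed hypermap with first boundary of type \(\circ^p\). Every edge of this boundary is marked white. By properness, the inner faces adjacent to it are black, or the adjacent face is an outer face across a black-marked boundary edge. Turning the first outer face into an inner white face of degree \(p\) therefore yields a properly coloured hypermap. Its remaining boundaries are alternating of types \(r_1,\ldots,r_{n-1}\), with roots renumbered down by one. Genus and vertex count are unchanged, and the inner face counts change only by the addition of one white face of degree \(p\). We mark the new face together with the image of the old root edge.

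\textbf{Inverse map.} Given an AB hypermap with \(n-1\) boundaries, a marked white inner face of degree \(p\), and a marked edge on it, declare that face outer. Make it boundary number \(1\), rooted at the marked edge, and shift the other labels up. Every edge of the new boundary is labelled \(\circ\), since the face was white. Properness is preserved because a white inner face and a white-labelled boundary impose the same adjacency constraints. Orientation is fixed by the convention that the outer face lies to the right of the root.

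\textbf{Checking the two maps are inverse.} Both compositions are the identity on the underlying maps. The marking data corresponds exactly: the root edge on one side, and the face and edge on the other. Different rooted edges of the same face give distinct rooted objects, so no symmetry factor arises. Summing over all objects gives \(P^{(0,n)}_{p,\bar r}=p\,\partial_{t_p}A^{(0,n-1)}_{\bar r}\).

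\textbf{Main obstacle.} The only delicate point is the bookkeeping of degenerate and edge cases. One must check that the face created by erasure is a genuine inner face, which uses \(p\geq1\), and that when \(n-1=1\) the AB boundary conventions, such as the shift in \(A^{(0,1)}\) and the special single-vertex map, match on both sides. One must also check the weights: that the vertex weight \(t^v\) is unchanged, and that the outer face carried no weight while the inner face carries \(t_p\). I would verify these directly from the generating-function definitions and treat the boundary cases separately.
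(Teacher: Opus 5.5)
Your bijection is the paper's. Its proof runs the erasure backwards: it chooses one of the $l_p$ faces of degree $p$ and one of its $p$ edges as the new root, so the count is $p\,l_p$ and the weight loses one factor of $t_p$, which is exactly $p\,\partial_{t_p}$.

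The gap is the colour of that face. In the definition of $M^{(g,n)}_{p,\bar r}$, $t_i$ weights \emph{black} inner faces and $\tilde t_i$ weights white ones. If erasure created a white face of degree $p$, your argument would establish $P^{(0,n)}_{p,\bar r}=p\,\partial_{\tilde t_p}A^{(0,n-1)}_{\bar r}$ instead. Declaring that $t_p$ ``is understood as'' the white-face weight changes the statement rather than proving it. This identification is the only weight bookkeeping the lemma contains, so it cannot be deferred as a caveat.

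The fix is what the paper's proof does (``choose the root on a black face of degree $p$''): read a boundary letter as the colour of the inner face across that boundary edge. This is the convention the paper's formulas actually use. In Lemma~\ref{lem:main_recursion} the face behind a peeled $\bullet$ edge is black and carries $t_L$. In the constellation specialization, $T(x)=x^m$ records that this face is black of degree $m$.

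Under this convention, a $\circ^p$ boundary is bordered by white inner faces, so its outer face plays the role of a black face of degree $p$. Erasing it creates a black inner face carrying $t_p$. Conversely, declaring a black face of degree $p$ outer yields a $\circ^p$ boundary.

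You followed the properness sentence in the definition and the phrase ``inner white face'' in the definition of erasure. Both are inconsistent with the rest of the paper. Once the erased face is taken to be black, your forward and inverse maps go through unchanged and coincide with the paper's argument.
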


\begin{proof}
    Let us do the erasure in reverse. Consider a hypermap with~{$n-1$} alternating boundaries, black boundary weight {$\prod_i t_i^{l_i}$}, and signature {$(r_1, \ldots, r_{n-1})$}. We want to choose the root on a black face of degree {$p$} that will make it into a new boundary. There are~{$l_p$} such faces with~{$p$} edges each that could serve as roots. The degree of~{$t_p$} decreases by 1 after choosing the root.
\end{proof}  

\begin{remark}
    A similar result clearly holds for the generating function of hypermaps with two or more monochromatic boundaries. This will be used later.
\end{remark}

After peeling, the number of~{$\bullet$} edges decreases, and after erasure, the number of boundaries decreases. One of these operations is always applicable to a given class of mixed boundary hypermaps other than the ones with one white boundary. Thus, by applying them repeatedly, we eventually express any~{$M^{(0,n)}_{(\ldots)}$} in terms of~{$H^{(0,1)}_{p}$}. 

This principle clearly generalizes to~{$g>0$} (see Remark~\ref{rem:genus}), but we omit the discussion for brevity.

\section{Specialization to hypermaps with two alternating boundaries}

\begin{corollary}\label{col:ab_general_g0n2}
    Setting~{$n=2$}, we get a recursive formula
    \begin{align} \label{eq:ab_general_g0n2}
        M^{(0,2)}_{p,(r_1, r_2)} = &\sum_{L\geq 1}t_L M^{(0,2)}_{p+L, (r_1-1, r_2)} + \\
        +& \sum_{j = 1}^{p}\left(M^{(0,2)}_{j, (r_1-1,r_2)} M^{(0,1)}_{p-j, (0)} + M^{(0,1)}_{j, (r_1-1)} M^{(0,2)}_{p-j, (0,r_2)}\right)+ \notag \\
        +& \sum_{s = 1}^{r_1}\left(M^{(0,2)}_{p, (r_1-s,r_2)} M^{(0,1)}_{0, (s-1)} + M^{(0,1)}_{p, (r_1-s)} M^{(0,2)}_{0, (s-1,r_2)}\right)+ \notag\\
        +& r_2\cdot  M^{(0,1)}_{p,(r_1 + r_2-1)}, \notag
    \end{align}
    where the degenerate terms are skipped: {$M^{(0,2)}_{0,(0,r_2)}=0$} for~{$r_2\geq1$}, so in the second sum only~{$j\leq p-1$} contributes and in the third sum only~{$s\geq 2$}, while~{$M^{(0,1)}_{0,(0)}=t$}, so that~{$j=p$} and~{$s=r_1$} do contribute.
\end{corollary}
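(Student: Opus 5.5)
This corollary is obtained by specializing Lemma~\ref{lem:main_recursion} to $n=2$, so the plan is to substitute $n=2$ term by term and then sort out the degenerate contributions. With $n=2$ we have $[[2,n]]=\{2\}$, so the ordered splittings $I\sqcup J=\{2\}$ are exactly two: $(I,J)=(\{2\},\emptyset)$ and $(I,J)=(\emptyset,\{2\})$. In the first case $\pi_I(\bar r)=(r_2)$ and $\pi_J(\bar r)=\emptyset$, so the product in the second line of the lemma becomes $M^{(0,2)}_{j,(r_1-1,r_2)}M^{(0,1)}_{p-j,(0)}$. In the second case it becomes $M^{(0,1)}_{j,(r_1-1)}M^{(0,2)}_{p-j,(0,r_2)}$. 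The third line splits into the two analogous products in exactly the same way. The first line carries over verbatim. In the last line the sum over $k$ has the single term $k=2$; removing $r_2$ leaves a one-boundary signature $(r_1+r_2-1)$ with prefactor $r_2$, as in \eqref{eq:ab_general_g0n2}.

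The remaining work is to justify the remarks on degenerate terms. These follow from the convention at the end of Lemma~\ref{lem:main_recursion}: $M^{(0,|J|+1)}_{0,(0,\pi_J(\bar r))}$ is treated as zero whenever $\pi_J(\bar r)\neq\emptyset$.

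1. In the second sum, the term $j=p$ of the $(\emptyset,\{2\})$ part involves $M^{(0,2)}_{0,(0,r_2)}$ with $r_2\ge1$. This vanishes, so only $j\le p-1$ contributes there.
2. In the third sum, the term $s=1$ of the $(\emptyset,\{2\})$ part involves the same factor $M^{(0,2)}_{0,(0,r_2)}$ and also vanishes. Hence only $s\ge2$ contributes.
3. In the $(\{2\},\emptyset)$ parts we have $J=\emptyset$, so the factors $M^{(0,1)}_{0,(0)}$ occurring at $j=p$ and at $s=r_1$ equal $t$. This is the single-vertex convention, and these terms must be kept.

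Geometrically, the surviving $j=p$ and $s=r_1$ terms correspond to the peeled black edge being glued to an adjacent boundary edge, which pinches off an empty component. The vanishing terms would require a second boundary of positive length to be cut off as an empty piece, which is impossible.

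The main obstacle is making sure these conventions are consistent with how the lemma's bijection actually treats an empty split-off piece. Concretely, one has to check that the convention $M^{(0,1)}_{0,(0)}=t$ gives the correct vertex weight in the pinching cases. I would verify this on the smallest cases, for instance $p=0$, $r_1=1$, $r_2=1$, by enumerating directly. Beyond that check, the corollary is a mechanical specialization and needs no further argument.
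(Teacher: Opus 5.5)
Your approach matches the paper's. The corollary is just Lemma~\ref{lem:main_recursion} at $n=2$: the two splittings $(I,J)=(\{2\},\emptyset)$ and $(\emptyset,\{2\})$ of $[[2,2]]=\{2\}$ plus the single $k=2$ term, and the paper gives no further argument. Your term-by-term substitution is correct, and so are your items 1 and 2.

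Item 3 has an index slip. In the $(\{2\},\emptyset)$ part of the third sum the summand is $M^{(0,2)}_{p,(r_1-s,r_2)}M^{(0,1)}_{0,(s-1)}$, so the factor $M^{(0,1)}_{0,(0)}=t$ occurs at $s=1$, not at $s=r_1$. At $s=r_1$ that part is $M^{(0,2)}_{p,(0,r_2)}M^{(0,1)}_{0,(r_1-1)}$. For $p=0$ this contains the vanishing factor $M^{(0,2)}_{0,(0,r_2)}$, so it is not a term that ``must be kept''. It is exactly the term discarded as ``$p=0$, $r_1'=0$'' in Term~d of the proof of Theorem~\ref{thm:FE_g0n2}.

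This vanishing is a degeneracy of the $I$-factor. The lemma's convention is phrased only for the $J$-factor $M^{(0,|J|+1)}_{0,(0,\pi_J(\bar r))}$. Here the vanishing comes instead from the blanket convention $M^{(0,2)}_{0,(0,r_2)}=0$ in the statement, i.e.\ from $M^{(0,2)}$ having no coefficients with $p+r_1=0$.

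The ``$s=r_1$'' in the statement refers to the upper endpoint of the range $2\le s\le r_1$ in the $(\emptyset,\{2\})$ part $M^{(0,1)}_{p,(r_1-s)}M^{(0,2)}_{0,(s-1,r_2)}$. There the first factor becomes $M^{(0,1)}_{p,(0)}=H^{(0,1)}_p$, which equals $t$ when $p=0$.

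With this correction the bookkeeping is complete. The weight check you list as the main obstacle is a check of Lemma~\ref{lem:main_recursion} itself. The corollary takes that lemma as given, so the check is not needed here.
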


In~\cite{bouttier2021}, a functional equation is introduced for~{$M^{(0,1)}$}. Our recursion allows us to obtain functional equations for higher generating functions. As a corollary, we shall provide the equation for~{$M^{(0,2)}$} and use it to obtain an equation for~{$A^{(0,2)}$}. In a later section, we shall provide a rational solution of this equation when specialized to a particular class of AB hypermaps: alternating boundary {$m$}-constellations. 

\begin{theorem}\label{thm:FE_g0n2}
    The big generating function {$M^{(0,2)}$} satisfies a functional equation
    \begin{align}
    M^{(0,2)} - P^{(0,2)} &= \\
    &=w_1\left[ T(x)M^{(0,2)} \right]_{x^{<0}}+ \tag{Term a} \\
    &+w_1xH^{(0,1)}\left(M^{(0,2)} - \frac{A^{(0,2)}}{x}\right)+ \tag{Term b} \\
    &+w_1xP^{(0,2)}\left(M^{(0,1)} - \frac{A^{(0,1)}}{x}\right)+ \tag{Term c} \\
    &+w_1 A^{(0,1)}(w_1)M^{(0,2)}+ \tag{Term d} \\
    &+w_1 A^{(0,2)}M^{(0,1)}+ \tag{Term e} \\
    &+\frac{w_1^2 w_2}{(w_1-w_2)^2}\,
            \left(M^{(0,1)}(x,w_1)-M^{(0,1)}(x,w_2)\right)
        - \frac{w_1 w_2^{\,2}}{w_1-w_2}\,
            \frac{\partial M^{(0,1)}}{\partial w_2}(x,w_2), \tag{Term f}
    \end{align}
    where
\begin{equation}
T(x) = \sum_{i\geq 1} t_i x^{i},
\end{equation}
all the generating functions are evaluated at the arguments indicated by their superscripts, and the square brackets denote the extraction of coefficients (in this case, from terms in~{$x$} in negative degrees).
\end{theorem}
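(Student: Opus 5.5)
The plan is to multiply the recursion of Corollary~\ref{col:ab_general_g0n2} by $w_1^{r_1}w_2^{r_2}x^{-1-p}$ and sum over all indices with $r_1\geq 1$, $r_2\geq 1$, $p\geq 0$. The terms with $r_1=0$ (those not covered by peeling) are exactly the pure-first-mixed ones. These make up $P^{(0,2)}$, which is why the left-hand side reads $M^{(0,2)}-P^{(0,2)}$. The shift $r_1-1\mapsto r_1$ on the right-hand side produces the overall factor $w_1$ in every term. The key step is to match each line of \eqref{eq:ab_general_g0n2} with one of Terms a--f and carry out the corresponding series manipulation, keeping track of the boundary conventions at the end of the corollary.

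First line (Term a). $\sum_L t_L M_{p+L,\ldots}x^{-1-p}$ is the coefficient of $x^{-1-p}$ in $T(x)\,M^{(0,2)}$, because $x^{L}\cdot x^{-1-(p+L)}=x^{-1-p}$. Since $p\geq0$ is required, we keep only the negative powers of $x$, which gives $[T(x)M^{(0,2)}]_{x^{<0}}$.

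Second line (Terms b, c). The sum $\sum_{j=1}^p a_j b_{p-j}$ is a convolution. With weights $x^{-1-j}$ and $x^{-1-(p-j)}$, the product carries $x^{-2-p}$, so one multiplies by $x$. The restriction $j\geq1$ removes the $p=0$ part of the first factor, which is why $A^{(0,2)}/x$ and $A^{(0,1)}/x$ are subtracted. In the first summand the second factor $M^{(0,1)}_{p-j,(0)}$ runs over $H^{(0,1)}$, including $t$ at $p-j=0$. In the second summand the factor $M^{(0,2)}_{p-j,(0,r_2)}$ runs over $P^{(0,2)}$, with the degenerate $p-j=0$ term vanishing as stated.

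Third line (Terms d, e). The sum over $s$ is a convolution in $r_1$ with $p$ fixed, and the second factor has $p=0$, i.e.\ it is a coefficient of $A^{(0,1)}(w_1)$ or $A^{(0,2)}(w_1,w_2)$. The convention $M^{(0,2)}_{0,(0,r_2)}=0$ and the shift in $A^{(0,1)}$ (constant term $t$) should make the $s$-ranges come out exactly as $w_1A^{(0,1)}M^{(0,2)}$ and $w_1A^{(0,2)}M^{(0,1)}$. This needs a short check.

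Fourth line (Term f). This is the main obstacle. We must compute
\begin{equation*}
\sum_{p\geq0}\ \sum_{r_1\geq1,\,r_2\geq1} r_2\, M^{(0,1)}_{p,(r_1+r_2-1)}\, w_1^{r_1}w_2^{r_2}x^{-1-p}.
\end{equation*}
Set $N=r_1+r_2-1$ and write $M^{(0,1)}(x,w)=\sum_N c_N(x)w^N$. Then we need the elementary identity
\begin{equation*}
\sum_{r_1,r_2\geq1,\ r_1+r_2=N+1} r_2\, w_1^{r_1}w_2^{r_2},
\end{equation*}
expressed through $w_1^N$, $w_2^N$ and $N w_2^{N}$. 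One can obtain it by differentiating a geometric sum: $\sum_{r_2}r_2 q^{r_2}$ with $q=w_2/w_1$, multiplied by $w_1^{N+1}$. This gives a combination of the form $\frac{w_1^2w_2}{(w_1-w_2)^2}(w_1^N-w_2^N)-\frac{w_1w_2}{w_1-w_2}\,N w_2^{N}$. Summing against $c_N$ turns $Nw_2^N$ into $w_2\partial_{w_2}M^{(0,1)}(x,w_2)$, which yields Term f. I would verify the exact prefactors by checking small $N$ (for instance $N=1$, where the sum equals $w_1w_2$). I would also check that the $N=0$ term, where $M^{(0,1)}$ has $w^0$ coefficients, cancels between the two pieces. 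Finally, collecting all six pieces gives the stated equation.
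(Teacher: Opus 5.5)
Your proposal is correct and follows the paper's own proof: you sum the $n=2$ recursion against $w_1^{r_1}w_2^{r_2}x^{-1-p}$ over $p\geq 0$, $r_1,r_2\geq 1$, identify the $r_1=0$ part as $P^{(0,2)}$, and match the six pieces; your Term f identity $\frac{w_1^2w_2}{(w_1-w_2)^2}(w_1^N-w_2^N)-\frac{w_1w_2}{w_1-w_2}Nw_2^N$ is the paper's, written with $Nw_2^{N}$ in place of $w_2\cdot Nw_2^{N-1}$. One small correction: at $N=0$ the two pieces do not cancel each other but each vanishes on its own, which is why the $H^{(0,1)}$ part of $M^{(0,1)}$ can be included at no cost.
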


\begin{proof}
We will go term by term over the right-hand side and establish equivalence to the six terms of the recursion~\eqref{eq:ab_general_g0n2}. Note that the summation in~{$M^{(0,2)}$} goes over all non-negative {$p,r_1, r_2$} such that~{$p+r_1 \geq 1, r_2 \geq 1$}, since we do not have boundaries of length zero. So the left-hand side of the equation becomes the sum of~{$M^{(0,2)}_{p,(r_1, r_2)}$} where~{$p\geq 0,r_1\geq 1, r_2 \geq 1$}, that is~{$M^{(0,2)}-P^{(0,2)}$} (the terms with~{$r_1=0$} are the pure-first-mixed ones counted by~{$P^{(0,2)}$}).

In a similar vein, we shall use the following identities:
\begin{gather} \label{eq:sum_correction1}
\sum_{\substack{p\geq0, r_1,r_2\geq 1}}\frac{w_1^{r_1}w_2^{r_2}}{x^{p+1}}M^{(0,2)}_{p,(r_1,r_2)} = M^{(0,2)}-P^{(0,2)}, \quad
\sum_{\substack{r_1,r_2\geq 1}}\frac{w_1^{r_1}w_2^{r_2}}{x}M^{(0,2)}_{0,(r_1,r_2)} = \frac{A^{(0,2)}}{x},
\end{gather}
\begin{gather} \label{eq:sum_correction2}
\sum_{l\geq0}H^{(0,1)}_{l}x^{-l} = xH^{(0,1)}(x),\quad
\sum_{\substack{p\geq0, r\geq0}}w_1^{r}x^{-p-1}M^{(0,1)}_{p,(r)} = M^{(0,1)}(w_1,x).
\end{gather}

Since the right-hand side is exactly
\begin{equation}
\sum_{\begin{array}{c}
    p \geq 0 \\ r_1 \geq 1, r_2 \geq 1
\end{array}}
M_{p,(r_1,r_2)}^{(0,2)} w_1^{r_1} w_2^{r_2} x^{-1-p},
\end{equation}
the recursion~\eqref{eq:ab_general_g0n2} is applicable to each summand. Below we recover each term of the theorem from the corresponding term of the recursion.

\textbf{Term a.} 
\begin{gather}
\sum_{\begin{array}{c}
    p \geq 0 \\ r_1 \geq 1, r_2 \geq 1
\end{array}}
\sum_{L\geq 1}t_L M^{(0,2)}_{p+L, (r_1-1, r_2)} w_1^{r_1} w_2^{r_2} x^{-1-p} = \varchange{p=p'-L \\ r_1'=r_1 - 1}= \\
=\sum_{\begin{array}{c}
    p' \geq L \\ r_1' \geq 0, r_2 \geq 1
\end{array}}
\sum_{L\geq 1}t_L M^{(0,2)}_{p', (r_1', r_2)} w_1^{r_1'+1} w_2^{r_2} x^{-1-p'+L} = \notag \\
= w_1 \sum_{L\geq 1} x^L t_L  \sum_{\begin{array}{c}
    p' \geq L \\ r_1' \geq 0, r_2 \geq 1
\end{array}}
M_{p',(r_1', r_2)} w_1^{r_1'}w_2^{r_2}x^{-1-p'}= w_1 \left[T(x)M^{(0,2)}\right]_{x^{<0}}. \notag
\end{gather}

\textbf{Term b.} 
\begin{gather}
\sum_{\begin{array}{c}
    p \geq 0 \\ r_1 \geq 1, r_2 \geq 1
\end{array}} \sum_{j=1}^{p} M_{j, (r_1-1, r_2)}^{(0,2)} M_{p-j,(0)}^{(0,1)} w_1^{r_1} w_2^{r_2} x^{-1-p}= \varchange{q=j\\ r_1'=r_1-1}=\\
=\sum_{\begin{array}{c}
    q \geq 1, l\geq 0 \\ r_1' \geq 0, r_2 \geq 1
\end{array}} M^{(0,2)}_{q,(r_1', r_2)} H^{(0,1)}_{l} w_1^{r_1'+1} w_2^{r_2} x^{-(q+l+1)}=  \notag \\
=w_1\left(
\sum_{\begin{array}{c}
    q \geq 1 \\ r_1' \geq 0, r_2 \geq 1
\end{array}} M^{(0,2)}_{q,(r_1', r_2)} w_1^{r_1'} w_2^{r_2} x^{-1-q}
\right)
\left(
\sum_{l\geq 0} H^{(0,1)}_{l} x^{-l}
\right) = \notag \\
= w_1xH^{(0,1)}\left(M^{(0,2)}-\frac{A^{(0,2)}}{x}\right), \notag
\end{gather}
where we used~\eqref{eq:sum_correction2} and, for the first factor, \eqref{eq:sum_correction1} together with the fact that the terms with~{$q=0$} sum up to~{$A^{(0,2)}/x$}.

\textbf{Term c.} Similarly, with~{$q=j$} and~{$l=p-j$},
\begin{gather}
\sum_{\begin{array}{c}
    p \geq 0 \\ r_1 \geq 1, r_2 \geq 1
\end{array}} \sum_{j=1}^{p-1} M_{j, (r_1-1)}^{(0,1)} M_{p-j,(0,r_2)}^{(0,2)} w_1^{r_1} w_2^{r_2} x^{-1-p}= \\
=\sum_{\begin{array}{c}
    q \geq 1, l\geq 1 \\ r_1' \geq 0, r_2 \geq 1
\end{array}} M^{(0,1)}_{q,(r_1')} P^{(0,2)}_{l,(r_2)} w_1^{r_1'+1} w_2^{r_2} x^{-(q+l+1)}= \notag  \\
=w_1\left(
\sum_{\begin{array}{c}
    q \geq 1 \\ r_1' \geq 0
\end{array}} M^{(0,1)}_{q,(r_1')} w_1^{r_1'} x^{-1-q}
\right)
\left(
\sum_{l\geq 1, r_2 \geq 1} P^{(0,2)}_{l,(r_2)} w_2^{r_2}x^{-l}
\right) = \notag  \\
= w_1xP^{(0,2)}\left(M^{(0,1)}-\frac{A^{(0,1)}}{x}\right), \notag
\end{gather}
where we used~{$M^{(0,2)}_{q,(0,r_2)}=P^{(0,2)}_{q,(r_2)}$} and~\eqref{eq:sum_correction2}.

\textbf{Term d.} 
\begin{gather}
\sum_{\begin{array}{c}
    p \geq 0 \\ r_1 \geq 1, r_2 \geq 1
\end{array}} \sum_{s=1}^{r_1} M_{p, (r_1-s, r_2)}^{(0,2)} M_{0,(s-1)}^{(0,1)} w_1^{r_1} w_2^{r_2} x^{-1-p}=\varchange{r'_1 = r_1 - s \\ r_1'\geq 0,\ b=s-1 \geq 0} = \\
\label{pf:term_d} =\sum_{\begin{array}{c}
    p \geq 0, b\geq 0 \\ r_1' \geq 0, r_2 \geq 1 
\end{array}}
M_{p, (r_1', r_2)}^{(0,2)} A^{(0,1)}_{b} w_1^{r_1'+b+1} w_2^{r_2} x^{-1-p}. \notag\end{gather}
Note that the terms with $p=0, r_1'=0$ are zero, so we may omit them from the sum, and \eqref{pf:term_d} can now be regrouped as:
\begin{gather}
\left(
    \sum_{\begin{array}{c}
    p +r_1' \geq 1, r_2 \geq 1
\end{array}} M^{(0,2)}_{p,(r_1', r_2)} w_1^{r_1'}w_2^{r_2} x^{-1-p} 
\right) 
\left(
    \sum_{b\geq 0} A^{(0,1)}_{b}w_1^{b+1}
\right) = \\ 
= w_1 A^{(0,1)}(w_1)M^{(0,2)}. \notag
\end{gather}

\textbf{Term e.} Similar to \textbf{Term d}: with~{$r_1'=r_1-s\geq0$}, {$b=s-1\geq1$},
\begin{gather}
\sum_{\begin{array}{c}
    p \geq 0 \\ r_1 \geq 1, r_2 \geq 1
\end{array}} \sum_{s = 2}^{r_1} M_{p, (r_1-s)}^{(0,1)} M_{0,(s-1,r_2)}^{(0,2)} w_1^{r_1} w_2^{r_2} x^{-1-p}= \\
=\sum_{\begin{array}{c}
    p \geq 0, b\geq 1 \\ r_1' \geq 0, r_2 \geq 1 
\end{array}}
M_{p, (r_1')}^{(0,1)} A^{(0,2)}_{(b,r_2)} w_1^{r_1'+b+1} w_2^{r_2} x^{-1-p}=\notag  \\
=\left(
    \sum_{p\geq 0, r_1'\geq 0} M_{p, (r_1')}^{(0,1)} w_1^{r_1'} x^{-1-p} 
\right) 
\left(
    \sum_{r_2 \geq 1,b\geq 1} A^{(0,2)}_{(b,r_2)}w_1^{b+1}w_2^{r_2}
\right) =\notag  \\ 
= w_1A^{(0,2)}M^{(0,1)}. \notag
\end{gather}

\textbf{Term f.} Starting from the sixth term of the recursion and setting~{$p'=r_1 + r_2  - 1 \geq 1$} we get:
\begin{gather}
    \sum_{\begin{array}{c}
    p \geq 0 \\ r_1 \geq 1, r_2 \geq 1
\end{array}} r_2 M_{p, (r_1+ r_2 - 1)}^{(0,1)} w_1^{r_1}w_2^{r_2} x^{-1-p} = \\
    = \sum_{p\geq 0, p'\geq 1} M_{p,(p')}^{(0,1)} x^{-1-p} \sum_{s=1}^{p'} (p'-s+1) w_1^s w_2^{p' - s + 1}. \notag
\end{gather}
Making use of the identity
\begin{equation}
    \sum_{s=1}^{p'} (p'-s+1) w_1^s w_2^{p'-s+1} = \frac{w_1^2 w_2}{(w_1 - w_2)^2}(w^{p'}_1 - w_2^{p'}) - \frac{w_1 w_2^2}{w_1 - w_2} p' w^{p'-1},
\end{equation}
we recover 
\begin{equation}
\mathrm{Term\ f} = \frac{w_1^2 w_2}{(w_1 - w_2)^2}\left(\mathcal{M}(x,w_1) - \mathcal{M}(x,w_2)\right) - \frac{w_1 w_2^2}{w_1 - w_2} \frac{\partial \mathcal{M}(x, w_2)}{\partial w_2},
\end{equation}
where~{$\mathcal{M} = M^{(0,1)} - H^{(0,1)}$}. But since~{$H^{(0,1)}$} does not depend on~{$w_i$}, we may just replace~{$\mathcal{M}$} by~{$M^{(0,1)}$} in the equality above.
\end{proof}

\section{Specialization to~\texorpdfstring{$m$}{m}-constellations}

\begin{definition}
    An~{$m$}-constellation with mixed boundary is a hypermap with mixed boundary that only has inner black faces of degree {$m$} and inner white faces of degree divisible by~{$m$}. We will denote the generating functions of~{$m$}-constellations by~{$X^{(g,n),m}$}, where~{$X$} can be one of the same letters as before. 
\end{definition}

The main result of~\cite{bouttier2021} is a rational expression for~{$A^{(0,1),m}$} obtained by solving the corresponding functional equation. We obtain~{$A^{(0,2),m}$} in a similar way.

The series {$M^{(0,2),m}$} are obtained from~{$M^{(0,2)}$} by substituting:
\begin{equation}
    \tilde{t}_i\mapsto 
\begin{cases}
    x_{i/m}\quad m|i \\
    0\text{ otherwise}.
\end{cases}\quad t_i\mapsto \delta_{i,m}.
\end{equation}

\begin{lemma}[Divisibility]\label{lem:divisibility}
    In an~{$m$}-constellation, the length of the monochromatic part of the word of a boundary is divisible by~{$m$}. In particular,
    \begin{equation}
    M^{(0,n),m}_{p,\bar r} = P^{(0,n),m}_{p,\bar r} = 0 \quad\text{unless } m\mid p,
    \end{equation}
    and consequently, writing {$A^{(0,2),m}=A^{(0,2),m}(w_1,w_2)$} and~{$A^{(0,1),m}=A^{(0,1),m}(w_1)$},
    \begin{gather}
    M^{(0,2),m}=\frac{A^{(0,2),m}}{x}+O(x^{-m-1}),\quad
    M^{(0,1),m}=\frac{A^{(0,1),m}}{x}+O(x^{-m-1}),\\
    H^{(0,1),m}=\frac{t}{x}+O(x^{-m-1}),\quad
    P^{(0,2),m}=O(x^{-m-1}). \notag
    \end{gather}
\end{lemma}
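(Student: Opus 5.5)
A natural route is a local counting argument that combines the proper coloring with the fact that every inner black face has degree exactly \(m\). The right quantity to count is the number of sides of edges incident to inner black faces. Each inner black face has degree \(m\), so this number is \(m\cdot(\#\text{inner black faces})\), which is divisible by \(m\).

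Every edge of the map is either an inner edge or a boundary edge. An inner edge separates two inner faces of opposite colors, since the coloring is proper. So each inner edge contributes exactly one black side to the count. A boundary edge marked \(\circ\) must be adjacent to an inner black face. A boundary edge marked \(\bullet\) is adjacent to an inner white face, and it contributes nothing. One caveat is an edge lying on two boundaries (or twice on the same one). I would rule this out using the convention that outer faces behave like faces of the color dual to their marks. In that picture, a \(\circ\)-marked side must face a black inner face, so a boundary-boundary edge never occurs in a nondegenerate hypermap. The single-vertex, single-face map with empty signature is the only exception and is treated separately.

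Doing the same count for inner white faces gives
\[
m\,B = E_{\mathrm{in}} + \#\{\circ\text{-boundary edges}\},\qquad \sum_{\text{white}} \deg = E_{\mathrm{in}} + \#\{\bullet\text{-boundary edges}\}.
\]
White degrees are all divisible by \(m\), so subtracting the two equations gives \(\#\circ - \#\bullet \equiv 0 \pmod m\). Summed over all boundaries, this is \(\sum_i(\#\circ_i-\#\bullet_i)\). For a first-mixed boundary of type \((p,r_1)\) together with alternating boundaries, this sum equals exactly \(p\). Hence \(m\mid p\) whenever \(M^{(0,n),m}_{p,\bar r}\neq 0\), and likewise for \(P\), which is the case \(r_1=0\). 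The "monochromatic part" statement is this same count, read boundary-word-wise: alternating segments contribute \(0\).

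The expansions then follow immediately from the definitions. In \(M^{(0,2),m}\), the terms with \(p=0\) are exactly \(A^{(0,2),m}/x\), and the next allowed \(p\) is \(m\), giving \(O(x^{-m-1})\). The same holds for \(M^{(0,1),m}\), where the \(p=0\) part is \(A^{(0,1),m}/x\) (including \(M_{0,(0)}=t\)). For \(H^{(0,1),m}\) the \(p=0\) term is \(t/x\). Finally, \(P^{(0,2)}\) has \(p\geq1\), hence \(p\geq m\).

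The main obstacle is the edge-side bookkeeping. One must make sure that boundary edges are counted once, with the correct color of the adjacent inner face. One must also handle degenerate situations: edges shared by two outer faces and the empty-signature map. I would handle these by checking them against the definition of proper coloring, which forbids \(\circ\)-\(\circ\) adjacencies between boundaries. If needed, an alternative is to prove divisibility by induction using Lemma~\ref{lem:main_recursion} and Lemma~\ref{lem:erase}. With \(t_L=\delta_{L,m}\), every term shifts \(p\) by \(m\) or splits it additively, so divisibility by \(m\) propagates from \(H^{(0,1),m}\). The base case \(H^{(0,1),m}_p=0\) for \(m\nmid p\) follows from the same counting argument with a single white boundary.
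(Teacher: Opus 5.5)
Your double count is sound and is a genuinely different route from the paper's, but one step is false as written: boundary--boundary edges are not excluded here. They are exactly cases 2--4 of the peeling Lemma~\ref{lem:main_recursion}, where the peeled $\bullet$ edge of the first boundary has on its other side a $\circ$ edge of the first boundary or of the $k$-th one (whence the factor $r_k$). The colouring rule only says that a $\circ$ side faces something acting black, and that can be a $\bullet$ side of an outer face. The repair takes one line. Proper colouring forces each such edge to carry one $\circ$ side and one $\bullet$ side; the paper's orientation of edges by colour presupposes the same convention. If $E_\partial$ is the number of these edges, your identities become
\[
mB = E_{\mathrm{in}} + \#\circ - E_\partial,\qquad \sum_{\text{white}}\deg = E_{\mathrm{in}} + \#\bullet - E_\partial,
\]
so the difference $\#\circ-\#\bullet = mB-\sum_{\text{white}}\deg\equiv 0\pmod m$ is unchanged.

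With this fix, here is how the two routes compare. The paper orients edges by colour and proves $m\mid a-b$ for a simple cycle by cutting the enclosed disk into single faces. It then applies this to the boundary word. Your count is the global, summed version of the same mod-$m$ cocycle. It does not need the boundary walk to be a simple cycle, which it is not once an edge has its own boundary on both sides, as in cases 2--3. It does not use planarity, and it handles all boundaries at once. It also makes explicit that what is controlled is $\sum_i(\#\circ_i-\#\bullet_i)$, which equals $p$ only because the other boundaries are alternating. The paper's cycle around the first boundary encloses the other outer faces and silently needs the same fact. A per-boundary reading of the lemma's first sentence fails without it: take a $k$-cycle separating boundaries $\circ^k$ and $\bullet^k$ with $m\nmid k$. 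In return, the paper's version gives a statement for every cycle enclosing only inner faces. The expansions are read off identically in both approaches. Your fallback induction is unnecessary, and it would not be self-contained. Erasing a white boundary with $m\nmid p$ produces a face not allowed in an $m$-constellation, so showing that those terms vanish requires the counting argument again.
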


\begin{proof}
    For a simple closed cycle of the map, let~{$a$} (resp.\ {$b$}) be the number of its edges traversed with (resp.\ against) this orientation. We claim that~{$m\mid a-b$}. Indeed, the claim is additive when an edge lying inside the cycle splits it into two cycles, so we just need to check it for a cycle bounding a single face {$F$}: then the cycle is the border of~{$F$} traversed so that~{$F$} lies on the right, so all its edges are counted in~{$a$} if~{$F$} is white and in~{$b$} if~{$F$} is black; hence~{$a-b=\deg F$} or~{$a-b=-\deg F$}, respectively. Since white faces have degree divisible by~{$m$} and black ones have degree exactly~{$m$}, we get what we want by induction.

    The word {$\circ^p(\bullet\circ)^{r}$} contains {$r$} occurrences of~{$\bullet$} and~{$p+r$} occurrences of~{$\circ$}, so the signed count is~{$r-(p+r)=-p$}. Hence~{$m\mid p$}.

    The asymptotics follow since maps counted by~{$M^{(0,2)}_{p,(r_1,r_2)}$} have~{$p\equiv0\bmod m$}, so that the only power of~{$x$} of degree {$>-m-1$} occurring in~{$M^{(0,2),m}$} is~{$x^{-1}$}, whose coefficient is~{$\sum_{r_1,r_2\geq1}A^{(0,2)}_{(r_1,r_2)}w_1^{r_1}w_2^{r_2}=A^{(0,2),m}$}. The other three series are treated in the same way.
\end{proof}

Now we are ready to~formulate and prove the main result of the paper: a~rational parametrization of~$A^{(0,2),m}$.

\begin{theorem}\label{thm:a02m_solution}
    Fix a positive integer {$d$}, and from now on we will consider only~{$m$}-constellations with white faces of degree not exceeding {$md$}. In other words, {$x_i = 0$} for~{$i>d$}.

    The generating function of~{$m$}-constellations with two alternating boundaries of genus {$0$} is equal to:
    \begin{equation}\label{eq:a02m_solution}
        A^{(0,2),m} = \frac{w_1 w_2}{w_1'w_2'}\left(\frac{m}{(s_1 - s_2)^2} - (m-1)\frac{\xi_1' \xi_2'}{(\xi_1 - \xi_2)^2}\right) - \frac{w_1 w_2}{(w_1 - w_2)^2},
    \end{equation} 
    where the symbols are determined from the following relations:
    \begin{gather}
        V = t + \sum_{i = 1}^d \mat{mi-1\\i} x_i V^{(m-1)i} \\
        \alpha_k = \sum_{i=k}^d \mat{mi-1\\ i-k} x_i V^{(m-1)i+k-1}, \quad k\in\{1, \ldots, d\} \\
        w(s) = s \frac{(1+\sum_{k=1}^d \alpha_k s^k)^{m-2}}{(1+V s)^2}\\
        x(z) = z + \sum_{k=1}^d \alpha_k z^{1-mk}\quad   y(z) = \frac{V}{z} + z^{m-1}, \quad s = z^{-m} \\
        \chi(s) = (x(z(s)))^m\quad \xi(s) = x(z(s))\,y(z(s)),\quad w(s) = \frac{\chi(s)}{\xi(s)^2},
    \end{gather}
    and~{$w_i = w(s_i)$}, {$\xi_i = \xi(s_i)$}, while~{$f'$} denotes a derivative in the~{$s$} variable. The right-hand side of~\eqref{eq:a02m_solution} is understood as a series in~{$w_1, w_2$}: the three terms are first collected as a series in~{$s_1, s_2$} (the poles on the diagonal cancel), and then~{$w_i=w(s_i)$} is substituted.
\end{theorem}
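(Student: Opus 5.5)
We specialize the functional equation of Theorem~\ref{thm:FE_g0n2} to $m$-constellations and solve it by a kernel-method argument in the spirit of \cite{bouttier2021}. First we set $t_i=\delta_{i,m}$, so $T(x)=x^m$, and group the terms linear in $M^{(0,2),m}$. The result has the form
\begin{equation*}
K(w_1,x)\,M^{(0,2),m} = w_1\bigl[x^m M^{(0,2),m}\bigr]_{x^{\ge 0}} + R(w_1,w_2,x),
\end{equation*}
with kernel $K = 1 - w_1x^m - w_1xH^{(0,1),m} - w_1A^{(0,1),m}(w_1)$. Here $[x^mM]_{x^{<0}} = x^mM - [x^mM]_{x^{\ge0}}$. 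The remainder $R$ collects $P^{(0,2),m}$, Term~c, $-w_1H^{(0,1),m}A^{(0,2),m}$, Term~e and Term~f. By Lemma~\ref{lem:divisibility}, the polynomial part $[x^mM^{(0,2),m}]_{x^{\ge0}}$ is simply $x^{m-1}A^{(0,2),m}$. The one-boundary data $H^{(0,1),m}$ and $A^{(0,1),m}$, and hence $M^{(0,1),m}$, are taken from \cite{bouttier2021}, translated into the present normalization (recall the shift by $w^{-1}$). They are parametrized by $x(z)$, $y(z)$ and $V$, with $xH^{(0,1),m}$ essentially $y$ on the physical branch. The unknowns are then $M^{(0,2),m}$ and $A^{(0,2),m}$. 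The series $P^{(0,2),m}$ is also unknown, but by Lemma~\ref{lem:erase} it equals $m\partial_{t_m}A^{(0,1)}$. I would rather eliminate it together with the rest.

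The key step is to cancel the kernel. On the parametrization, $K=0$ should correspond to $w_1 = \chi/\xi^2$ evaluated at a suitable point, i.e.\ $w_1=w(s)$ with $x=x(z)$ and $s=z^{-m}$. The relation $w=\chi/\xi^2$ in the statement is exactly of this shape: $x^m/(xy)^2$. Since $K$ is polynomial of degree $m$ in $x$ (after multiplying by $x$), for fixed $w_1$ it has several roots $x=x(z\omega)$, $\omega^m=1$, sharing the same $s$. These $m$ roots are what eventually produce the factor $m$ and the $(m-1)$ correction. For each root $z_j$ with $x_j = x(z_j)$ formal in $w_1$, substituting $x=x_j$ kills the left-hand side. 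This gives $m$ linear equations,
\begin{equation*}
0 = w_1x_j^{m-1}A^{(0,2),m} + R(w_1,w_2,x_j),
\end{equation*}
in the unknowns $A^{(0,2),m}$ and the coefficients of $P^{(0,2),m}(x)$. The series $P^{(0,2),m}(x)$ enters $R$ through $P(1-w_1A^{(0,1)} - \dots)$, which on the kernel becomes a known factor times $P(x_j)$. I would then show that symmetrization over $j$ eliminates $P$ and leaves an expression for $A^{(0,2),m}$. An alternative is to recognize $P^{(0,2),m}$ as $\partial_V$ of the one-point data, using Lemma~\ref{lem:erase} and $\partial_{t_m}$ acting through $V$.

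The rest is to express everything in $s_1$ and $s_2$. Term~f contributes the $-w_1w_2/(w_1-w_2)^2$ part and the $\partial_{w_2}$ piece. This should explain the Jacobian prefactor $w_1w_2/(w_1'w_2')$, after trading $\partial_{w_2}$ for $\partial_{s_2}/w_2'$. After the symmetrization over the roots, the combination $m/(s_1-s_2)^2 - (m-1)\xi_1'\xi_2'/(\xi_1-\xi_2)^2$ should arise as a sum of Cauchy-type kernels $\sum_j$ over the $m$ preimages. Using $s=z^{-m}$, one has $\sum_{\omega}\frac{dz_1\,dz_2}{(z_1-\omega z_2)^2}$-type identities yielding $m/(s_1-s_2)^2$ in the $s$ variables.

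The main obstacle I expect is this last identification: tracking which of the $m$ roots are formal power series in $w_1$ (the "small" branch) versus the others, and proving the algebraic identity that collapses the sum into the stated closed form. I would first verify it for $m=2$, $d=1$ by comparing coefficients against the recursion of Corollary~\ref{col:ab_general_g0n2}. The same check, carried out as a series in $s_1$ and $s_2$, also confirms that the diagonal poles cancel. As a final safeguard, uniqueness of the solution follows from the recursion of Corollary~\ref{col:ab_general_g0n2}, which determines all coefficients. Hence it suffices to check that the proposed formula satisfies the specialized functional equation, and this verification could be done directly in the $(s,z)$ parametrization.
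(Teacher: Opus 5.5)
\textbf{There is a genuine gap at the step where you eliminate $P^{(0,2),m}$.}

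By Lemma~\ref{lem:divisibility}, multiply the specialized equation by $x$. Then every ingredient depends on $x$ only through $x^m$, i.e.\ through $s$. This covers $xM^{(0,2),m}$, $xP^{(0,2),m}$, $xM^{(0,1),m}$, $xH^{(0,1),m}$, $x\mathcal{F}$, and the kernel $K$ itself, which is not a polynomial in $x$ but a function of $x^m$. So substituting $x=\omega x_0$ with $\omega^m=1$ just reproduces the equation at $x_0$, multiplied by $\omega^{-1}$. Your $m$ ``linear equations'' are one and the same, and symmetrizing over them eliminates nothing. The other solutions of $w(s)=w_1$ are not power series in $w_1$ vanishing at $0$, so they are not admissible substitutions either.

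The kernel method therefore gives a single relation between $A^{(0,2),m}(w_1,w_2)$ and $P^{(0,2),m}(w_2,x(s_1))$. That is exactly where the paper stands after setting $w=w(x)$. Since $P^{(0,2),m}$ is a full series in $x^{-1}$, not finitely many unknowns, it must be computed independently.

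Your fallbacks do not supply it:
\begin{itemize}
\item Lemma~\ref{lem:erase} gives $P^{(0,2)}=\sum_{k\ge1}mk\,x^{-1-mk}\partial_{t_{mk}}A^{(0,1)}$, not $m\partial_{t_m}A^{(0,1)}$. The derivatives in $t_{2m},t_{3m},\dots$ leave the slice $t_i=\delta_{i,m}$ on which the parametrization of $A^{(0,1),m}$ holds. So neither $\partial_{t_m}$ nor $\partial_V$ of the one-point data suffices.
\item The uniqueness argument has the same hole. The peeling recursion never produces the $r_1=0$ coefficients $M^{(0,2)}_{p,(0,r_2)}$; they are input to it.
\end{itemize}

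\textbf{The missing idea}, which is the core of the paper's argument, is to apply the insertion operator $L_{x_1}=\sum_{k}mk\,x_1^{-1-mk}\partial_{t_{mk}}$ to the \emph{unspecialized} one-boundary equation~\eqref{pf:disk_peeling}, and only then specialize. Under $L$:
\begin{itemize}
\item the terms containing $L\mathcal{M}$ carry the one-boundary kernel and vanish on the curve;
\item $A^{(0,1)}$ is mapped to $P^{(0,2)}$;
\item $L$ acting on $T(x)$ produces $Q$;
\item $LH^{(0,1)}=\mathcal{H}$ is known from the monochromatic cylinder function~\eqref{eq:bergman}.
\end{itemize}
This yields $P^{(0,2)}=\lambda\mathcal{H}+\mu Q$ (Lemma~\ref{lem:step2}) and then $P^{(0,2)}\,dx_1\,dw_2/w_2=B_\xi-B_s$ (Lemma~\ref{lem:step3}).

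The roots of unity you were looking for do enter, but at this point: $\mathcal{H}(x_1,x_2)=\frac1m\sum_{\zeta^m=1}\zeta H^{(0,2)}(x_1,\zeta x_2)$ turns the Bergman kernel into $\frac1m(B_s-B_\chi)$. In the final formula the $P$-term contributes $-m(B_\xi-B_s)$ and Term~f contributes $B_\xi-B_w$. The coefficient $m-1$ comes from adding these two, not from a sum over kernel roots.
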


\begin{remark}
    The above definitions are not ad hoc. They come from the spectral curve topological recursion for the enumeration of hypermaps. The machinery of topological recursion is not used in the proof and is not required for understanding. The rest of the remark assumes the reader's familiarity with TR. 
    
    By examining the result of~\cite{bouttier2021} (later stated explicitly in~\cite{baillard2026}), {$A^{(0,1),m}$} can be interpreted in terms of a~particular topological recursion's spectral curve. To elaborate: define~{$\bar \Sigma$} as a pullback {$\bar \Sigma \to_{\chi(s)} \Sigma$} where~{$\Sigma$} is a spectral curve of the hypermap enumeration problem (see Chapter~8 of~\cite{eynard2016counting}, \cite{Kazarian_2015} or the addendum of~\cite{bouttier2021}). This spectral curve is given by~{$x(z), y(z)$} in our notation. The rational form of~{$A^{(0,1),m}$} can then be easily interpreted as a function on~{$\bar\Sigma$}.

    Our solution for~{$A^{(0,2),m}$} is also given in terms of this cover of the hypermap spectral curve, and it is similar to the pullback of the standard {$B(z_1, z_2)$} data of the corresponding TR. Interpretation of higher {$A^{(g,n),m}$} in terms of this curve is an avenue for future research. 
\end{remark}

\subsection{Proof of Theorem \ref{thm:a02m_solution}} 
\subsubsection{Enumeration of hypermaps with monochromatic boundary}

We shall need a few facts from the topological recursion for monochromatic boundary hypermaps (see Chapter~8 of~\cite{eynard2016counting} or~\cite{Kazarian_2015}). For our purposes we need the fact that, for certain $x(z), y(z)$, after setting $Y(x(z)))=y(z)$, the following two equalities hold: 
\begin{gather}
    Y(x) = \sum_{i=2}^d t_i x^{i-1} + H^{(0,1)}(x), \\
    H^{(0,2)}(x_1,x_2)dx_1 dx_2 = \frac{dz_1 dz_2}{(z_1 - z_2)^2}-\frac{dx_1 dx_2}{(x_1 - x_2)^2}. \label{eq:bergman}
\end{gather}
where $z_i = z(x_i)$. We refer the reader to the cited works for details. The only thing important for our proof is that after specializing to $m$-constellations $x(z), y(z)$ become precisely those defined in Theorem~\ref{thm:a02m_solution}. Note that these results hold when, as we do, the degree of faces is bounded.

Let us, following the same variable and weights conventions as before, define $\mathcal{H}(x_1, x_2)$ to be the generating function counting hypermaps with two monochromatic boundaries where the second boundary has a length divisible by $m$. 

We introduce an operator $L_X$ acting on series $F$:
\begin{equation}
L_XF = \sum_{k\geq 1} mk X^{-1-mk} \frac{\partial F}{\partial t_{mk}}.
\end{equation}
This operator closely resembles the one used in our Lemma~\ref{lem:erase}. Following the same logic as the proof of that lemma, we see that
\begin{equation} \label{eq:LH}
    \mathcal{H}(x_1, x_2) = L_{x_2} H^{(0,1)}(x_1).
\end{equation}
Similarly
\begin{equation}\label{eq:LA}
    L_{x_2}A^{(0,1)}(w) = P^{(0,2)}(x_2, w).
\end{equation}

\subsubsection{Specializing to~\texorpdfstring{$m$}{m}-constellations}

From now on, we have specialized all of the generating functions to the case of $m$-constellations. We shall drop the superscript $m$ for visual clarity, e.g. $M^{(0,2)}$ from now on denotes $M^{(0,2),m}$ unless stated otherwise.

The series $x(z)$ and $y(z)$ from now on will have the definitions as in the theorem statement:
\begin{equation}
x(z) = z + \sum_{k=1}^d \alpha_k z^{1-mk},\quad   y(z) = \frac{V}{z} + z^{m-1}.
\end{equation}

From the proof of Lemma~2 of \cite{bouttier2021} we know the~${w = w(x)}$ from the theorem is such that ${K(w(x), x)=0}$, where
\begin{equation}\label{eq:on_a_curve}
    K(w,x) = 1 - wA^{(0,1)}(w) - wxY(x).
\end{equation}
This will be used in some of the proofs, somtimes implicitly, without writing that~$w$ is treated as~$w(x)$. One may instead imagine that we are working on a curve given by~${K(w,x)=0}$. 

We shall start with a few observations about $x(z)$ and $\mathcal{H}$.

\setlength{\parindent}{0pt}

\begin{lemma}\label{lem:step1}
    The following equalities hold:
    \begin{enumerate}
        \item Let $\zeta^m = 1$. Then $x(\zeta z) = \zeta x(z)$.
        \item $\frac{1}{m}\sum_{\zeta^m=1} \zeta \cdot (\zeta x)^{-1-q} = \begin{cases}
            x^{-1-q}&m\mid q \\ 0 & m\not| q.\end{cases}$
        \item $\mathcal{H}(x_1, x_2) = \frac{1}{m} \sum_{\zeta^m=1} \zeta H^{(0,2)}(x_1, \zeta x_2)$.
        \item $\mathcal{H}(x_1, x_2) dx_1 dx_2 = \frac{1}{m}(B_s - B_{\chi})$.
    \end{enumerate}
    In the last item we set
    \begin{equation}
    B_{f(s)} = \frac{df(s_1) df(s_2)}{(f(s_1)- f(s_2))^2},
    \end{equation}
    the $s$ and $\chi$ are the same as in the statement of the Theorem~\ref{thm:a02m_solution}, and $x_i = x(z(s_i))$. 
\end{lemma}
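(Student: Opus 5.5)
Items 1 and 2 are direct computations, and I will do them first. For item 1, $x(\zeta z)=\zeta z+\sum_k\alpha_k\zeta^{1-mk}z^{1-mk}=\zeta x(z)$, since $\zeta^{-mk}=1$. For item 2, $\zeta(\zeta x)^{-1-q}=\zeta^{-q}x^{-1-q}$. Averaging over the $m$-th roots of unity, $\frac1m\sum_{\zeta^m=1}\zeta^{-q}$ equals $1$ if $m\mid q$ and $0$ otherwise.

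For item 3, I will expand $H^{(0,2)}(x_1,x_2)$ as a series in $x_2^{-1}$. Following the white-boundary-erasure logic of Lemma~\ref{lem:erase}, the coefficient of $x_2^{-1-q}$ counts maps whose second boundary has length $q$. Substituting $x_2\mapsto\zeta x_2$, multiplying by $\zeta$ and averaging then kills exactly the terms with $m\nmid q$, by item 2. This leaves the generating function with second boundary length divisible by $m$, which is $\mathcal H(x_1,x_2)$. One point needs care here. The specialization to $m$-constellations is applied after choosing the root face. Equivalently, by \eqref{eq:LH}, $\mathcal H=L_{x_2}H^{(0,1)}(x_1)$, and $L_{x_2}$ only differentiates in $t_{mk}$. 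So I should check that $\mathcal H$ really equals the $m\mid q$ part of $H^{(0,2)}$ evaluated at the $m$-constellation specialization, with derivatives taken before specializing. I would state this via \eqref{eq:LH}: derivative in $t_q$, then specialize, so a $t_q$ with $m\nmid q$ is simply discarded.

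For item 4, I will apply \eqref{eq:bergman} to each term of item 3. Write $x_2=x(z_2)$. By item 1, $\zeta x_2=x(\zeta z_2)$, so $z(\zeta x_2)=\zeta z_2$, and the measure satisfies $d(\zeta x_2)=\zeta\,dx_2$. Hence
\[
\zeta H^{(0,2)}(x_1,\zeta x_2)\,dx_1dx_2=\frac{dz_1\,d(\zeta z_2)}{(z_1-\zeta z_2)^2}-\frac{dx_1\,d(\zeta x_2)}{(x_1-\zeta x_2)^2}.
\]
Summing over $\zeta$ uses the standard identity
\[
\sum_{\zeta^m=1}\frac{\zeta\,du\,dv}{(u-\zeta v)^2}=\frac{d(u^m)\,d(v^m)}{(u^m-v^m)^2}\cdot\frac{1}{m}\cdot m=\frac{d(u^m)d(v^m)}{(u^m-v^m)^2}.
\]
I would prove this by noting that the left side is the pullback-symmetrization of the Bergman kernel under $u\mapsto u^m$, or directly by taking $\partial_u\partial_v$ of $\log(u^m-v^m)=\sum_\zeta\log(u-\zeta v)$. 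With $u=x_1,v=x_2$ this gives $B_\chi$, since $\chi=x^m$. With $u=z_1,v=z_2$ it gives the Bergman kernel in $z^m=s^{-1}$. Finally, $B_{1/s}=B_s$ by Möbius invariance, so the first term is $B_s$. Including the $\frac1m$ gives $\frac1m(B_s-B_\chi)$.

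The main obstacle is the justification in item 3. I need to confirm that the formula \eqref{eq:bergman} for $H^{(0,2)}$ remains valid after specializing the $t_i$, with $x(z)$ becoming the specialized curve. I also need to handle the branch choice $z(\zeta x)=\zeta z(x)$ near $x=\infty$, where $z\sim x$. The latter follows from item 1 together with uniqueness of the inverse series near infinity.
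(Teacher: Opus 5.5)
Your proposal is correct and follows the paper's proof. Items 1--3 are the same direct computations (item 3 is the roots-of-unity filter from item 2 applied to the $x_2$-expansion of $H^{(0,2)}$), and for item 4 you supply the computation the paper omits, namely substituting \eqref{eq:bergman} with $z(\zeta x_2)=\zeta z_2$ and using $\sum_{\zeta^m=1}\zeta\,du\,dv/(u-\zeta v)^2 = d(u^m)\,d(v^m)/(u^m-v^m)^2$ together with $B_{1/s}=B_s$; the stray factor $\frac{1}{m}\cdot m$ in your display is harmless, and the validity of \eqref{eq:bergman} after specialization is, as in the paper, taken from the cited literature.
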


\begin{proof}\hfill
\begin{enumerate}
    \item \begin{gather} \label{pf:1}
    x(\zeta z)= \zeta z + \sum \alpha_k \zeta^{1-mk}z^{1-mk} = \zeta \left(z+ \sum \alpha_k \zeta^{-mk} z^{1-mk}\right) = \zeta x(z).
    \end{gather}
    \item \begin{gather} \label{pf:2}
        \frac{1}{m}\sum_{\zeta^m=1} \zeta(\zeta(x_2))^{-q-1}=x_2^{-q-1}\frac{1}{m}\sum_{\zeta^m=1} \zeta^{-q}= \begin{cases}
        x^{-q-1}& m\mid q \\
        0 & m\not\mid q
    \end{cases}.
    \end{gather}
    \item Since $\mathcal{H}$ is obtained from $H^{(0,2)}$ by replacing $x_2^{-1-q} \to\begin{cases}
            x_2^{-1-q}&m\mid q \\ 0 & m\not| q.\end{cases}$, this follows directly from \textit{2.}
    \item Obtained by substituting the specialized \ref{eq:bergman} into the right-hand side of \textit{3} and simple computations that we omit.
\end{enumerate}
\end{proof}

From now on, for ease of use of the results of~\cite{bouttier2021}, we introduce slightly different versions of~{$A$} and~{$M$}. We set
    \begin{gather}
        \mathcal{A} = wA^{(0,1)} = \sum_{r\geq 0}A^{(0,1)}_{r} w^{r+1},  \\
        \mathcal{M} = M^{(0,1)}-H^{(0,1)}= \sum_{p\geq0, r\geq 1} M^{(0,1)}_{p,r} w^r x^{-1-p}. \notag
    \end{gather}
    
    In this notation, the functional equation of~\cite{bouttier2021} (formula~(26) of the~cited paper) reads
    \begin{gather}\label{pf:disk_peeling}
        \mathcal{M}(x,w)= w\left[\sum t_l x^l \left(\mathcal{M}(x,w) + H^{(0,1)}(x)\right)\right]_{x^{<0}} + \\
        + wx H^{(0,1)}(x)\left(\mathcal{M} + H^{(0,1)}\right) + \mathcal{A}(w) \mathcal{M}(x,w). \notag
    \end{gather}
    
    Specializing~\eqref{pf:disk_peeling} to~{$m$}-constellations, we get 
    \begin{gather}\label{pf:disk_peeling_m}
        \mathcal{M}(x,w)= w\left[x^m \left(\mathcal{M}(x,w) + H^{(0,1)}(x)\right)\right]_{x^{<0}} + \\
        + wx H^{(0,1)}(x)\left(\mathcal{M} + H^{(0,1)}\right) + \mathcal{A}(w) \mathcal{M}(x,w),\notag
    \end{gather}
    where the generating functions are those specialized for~{$m$}-constellations.

    Taking a look at the~{$x^{<0}$} coefficient in~\eqref{pf:disk_peeling_m}, we get:
    \begin{gather} \label{pf:coef_extract}
    \left[x^m (\mathcal{M} + H^{(0,1)})\right]_{x^{<0}}= x^m (\mathcal{M} + H^{(0,1)}) - x^{m-1} \left[\mathcal{M} + H^{(0,1)}\right]_{x^{-1}}, \\
    \left[\mathcal{M} + H^{(0,1)}\right]_{x^{-1}} = \frac{\mathcal{A}}{w}.
    \end{gather}
    So the recursion~\eqref{pf:disk_peeling_m}, once rearranged, reads
    \begin{gather} \label{pf:01eq_exctracted}
        \left(1 -\mathcal{A} - wx(x^{m-1} + H^{(0,1)})\right)\left(\mathcal{M}+H^{(0,1)}\right) = \\
        = H^{(0,1)} - \mathcal{A}\cdot\left(x^{m-1}+H^{(0,1)}\right). \notag
    \end{gather}

    By Proposition~3 of~\cite{bouttier2021} we have~{$x^{m-1} + H^{(0,1)(x)}=Y(x)$}, and by Theorem~1 of~\cite{bouttier2021}, we have~{$\mathcal{A}(w(s))= 1-\frac{x(z(s))^{m-1}}{y(z(s))}$}. So, in the~{$s$}-coordinate, we have
    \begin{equation}\label{pf:3}
    1-\mathcal{A} - wx(x^{m-1} + H^{(0,1)}) = 0.
    \end{equation}
More precisely, the equality holds when substituting $w=w(s), x=x(s)$.
    
We are now ready to prove the next lemma we need.
\begin{lemma}\label{lem:step2}
    When specialized to $m$-constellations and written in the $s$-coordinate we have
    \begin{equation}
    P^{(0,2)}(w_2,x_1) = \lambda\, \mathcal{H}(x_1, x_2) + \mu\, Q(x_1, x_2, w_2),
    \end{equation}
    where
    \begin{gather}
    Q(x_1,x_2,w) = \sum_{k\geq 1} mk\, x_1^{-1-mk}\left[x_2^{mk}M^{(0,1)}\right]_{x_2^{<0}}, \\
        \lambda = \frac{x_2 \left(x_2^{m-1}+\mathcal{M}(w_2, x_2)+2H^{(0,1)}(x_2)\right)}{x_2^{m-1} - \mathcal{M}(w_2, x_2)}, \\
        \mu = \frac{1}{x_2^{m-1} - \mathcal{M}(w_2, x_2)}.
    \end{gather}
\end{lemma}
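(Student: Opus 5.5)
Apply the operator $L_{x_1}$ to the disk equation \eqref{pf:01eq_exctracted}, written in the variables $(x_2,w_2)$. By \eqref{eq:LA} and \eqref{eq:LH}, this operator turns $A^{(0,1)}(w_2)$ into $P^{(0,2)}(x_1,w_2)$ and $H^{(0,1)}(x_2)$ into $\mathcal{H}(x_2,x_1)$; note that $\mathcal{H}$ is symmetric by Lemma~\ref{lem:step1}(4). The idea is that the equation is polynomial in the unknowns, so differentiating it with respect to $t_{mk}$ and summing gives a linear relation between the derivatives. When specialized, this relation is solved by dividing by the kernel coefficient. That coefficient vanishes on the curve by \eqref{pf:3}, which will leave exactly the ratio structure appearing in $\lambda,\mu$.

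The derivation should be done before specializing $t_i\mapsto\delta_{i,m}$, since $L$ differentiates in $t_{mk}$. Concretely, I would start from the unspecialized disk equation \eqref{pf:disk_peeling}. Its first term $w[T(x)(\mathcal{M}+H^{(0,1)})]_{x^{<0}}$ contains $t_l$ explicitly. Applying $\sum_k mk\,x_1^{-1-mk}\partial_{t_{mk}}$ to that explicit dependence gives
\[
w_2\sum_k mk\,x_1^{-1-mk}\bigl[x_2^{mk}(\mathcal{M}+H^{(0,1)})\bigr]_{x_2^{<0}} = w_2\,Q(x_1,x_2,w_2),
\]
using $\mathcal{M}+H^{(0,1)}=M^{(0,1)}$. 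The remaining contributions come from differentiating the unknowns $\mathcal{M}$, $H^{(0,1)}$ and $\mathcal{A}=w_2A^{(0,1)}$. Here $L\mathcal{A}=w_2P^{(0,2)}$ and $LH^{(0,1)}(x_2)=\mathcal{H}$. We also need $L\mathcal{M}$, which is a new unknown: the first-mixed two-boundary analogue in which the monochromatic boundary is the marked one.

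After specializing, I would rearrange the differentiated equation exactly as in \eqref{pf:coef_extract}–\eqref{pf:01eq_exctracted}. This uses $[L(\mathcal{M}+H)]_{x_2^{-1}}=P^{(0,2)}$, by the divisibility Lemma~\ref{lem:divisibility}. The result has the shape
\[
K\cdot L(\mathcal{M}+H) + \bigl(\partial_{\mathcal{A}}\text{-coefficient}\bigr)\,w_2P^{(0,2)} + \bigl(\partial_H\text{-coefficient}\bigr)\,\mathcal{H} + w_2Q = 0,
\]
where $K=1-\mathcal{A}-w_2x_2Y(x_2)$. Substituting $w_2=w(s_2)$ and $x_2=x(s_2)$ makes $K=0$ by \eqref{pf:3}, so the unknown $L\mathcal{M}$ drops out. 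What remains is a linear equation in $P^{(0,2)}$, $\mathcal{H}$ and $Q$, and solving it for $P^{(0,2)}$ gives the claim.

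To match the stated $\lambda,\mu$, I would simplify the coefficients using \eqref{pf:3} and the identity $w_2x_2Y=1-\mathcal{A}$. Two coefficients need computing:
\begin{itemize}
\item the coefficient of $P^{(0,2)}$, namely $-w_2(\mathcal{M}+H)+w_2(x_2^{m-1}+H)=w_2(x_2^{m-1}-\mathcal{M})$;
\item the coefficient of $\mathcal{H}$, which comes from the $H$-derivatives of $wxH(\mathcal{M}+H)$, of the bracket term and of the right-hand side, and should collapse to $w_2x_2(x_2^{m-1}+\mathcal{M}+2H)$.
\end{itemize}
Dividing by $w_2(x_2^{m-1}-\mathcal{M})$ then yields the stated $\lambda$ and $\mu$.

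The main obstacle is justifying that $L$ can be applied termwise to the peeling equation with the specialization done afterwards. The concern is that the coefficient-extraction bracket $[T(x)\cdot]_{x^{<0}}$ interacts with $\partial_{t_{mk}}$: the $t_l$ with $m\nmid l$ must contribute nothing after specialization. Divisibility should make those terms vanish. Failing that, I would redo the argument combinatorially, by peeling the first black edge of a pure-first-mixed map whose second boundary is marked, and reading off the same four-case decomposition.
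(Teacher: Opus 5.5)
Your proposal is correct and is essentially the paper's proof. Both apply $L_{x_1}$ to the unspecialized disk equation \eqref{pf:disk_peeling} in the variables $(x_2,w_2)$, read off $w_2Q$ from the explicit $t_{mk}$-dependence of $T$, specialize and extract coefficients as in \eqref{pf:coef_extract}, use the kernel identity \eqref{pf:3} to eliminate the unknown $L\mathcal{M}$, and solve the remaining linear relation for $P^{(0,2)}$.

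Your bookkeeping is exactly what yields the stated $\lambda$ and $\mu$: the factor $w_2$ on $Q$, the term $w_2P^{(0,2)}\mathcal{M}$ from $L\mathcal{A}\cdot\mathcal{M}$, the contribution $w_2x_2^m\mathcal{H}$, and the symmetry of $\mathcal{H}$. The obstacle you flag is not a real one, since $L$ only differentiates in the $t_{mk}$ and commutes with extracting $x_2$-coefficients before specialization.
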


\begin{proof}
        We start with \eqref{pf:disk_peeling} (with arguments $x_2, w$) and apply $L=L_{x_1}=\sum_{k\geq 1} mkx_1^{-1-mk}\frac{\partial}{\partial t_{mk}}$ to both sides:
        \begin{gather}
    L\mathcal{M} = \\
    = w \sum_{k\geq 1} mk x_1^{-mk-1}\left[x_2^{mk} \left(\mathcal{M} + H^{(0,1)}\right)\right]_{x_2^{<0}}+w\left[x_2^m \left(L\mathcal{M} + \mathcal{H}\right)\right]_{x_2^{<0}}+\notag \\
    + wx_2 \mathcal{H} (\mathcal{M} + H^{(0,1)})+wx_2H^{(0,1)} \left(L\mathcal{M}+ \mathcal{H}\right) + \notag\\
    +w P^{(0,2)}+ A^{(0,1)} L \mathcal{M}. \notag
\end{gather}

The first summand on the right-hand side is~{$Q$} from the lemma statement. The second, in a similar way as \eqref{pf:coef_extract}, reduces to
\begin{equation}
wx_2^m \left(L\mathcal{M} + \mathcal{H}\right) -wx_2^{m-1}P^{(0,2)}.
\end{equation}

Rearranging, we get
\begin{gather}
    L\mathcal{M}\left(1-\mathcal{A} - wx_2(x_2^{m-1}+H^{(0,1)})\right)= \\
    = Q+wx_2\mathcal{H}(\mathcal{M}+H^{(0,1)}) + wx_2 H^{(0,1)}\mathcal{H}+wP^{(0,2)}-wx_2^{m-1}P^{(0,2)} \notag.
\end{gather}
By~\eqref{pf:3} the left-hand side is zero and we immediately get the desired expression for~{$P^{(0,2)}$}.
\end{proof}

Rewriting~\eqref{pf:01eq_exctracted} back in terms of~{$M^{(0,1)} = \mathcal{M} + H^{(0,1)}$} and using $x^{m-1}+H^{(0,1)}=Y$:
\begin{equation}
(1-\mathcal{A} - wxY)M^{(0,1)} = H^{(0,1)}-\mathcal{A} Y.
\end{equation}
Applying {$\partial_w$} to the equality and, keeping in mind \eqref{eq:on_a_curve}, we get
\begin{equation}
M^{(0,1)} = \frac{Y \partial_w \mathcal{A}}{\partial_w \mathcal{A} + xY}.
\end{equation}
From here we get a following expression for the denominator of~{$\lambda$}:
\begin{equation}\label{pf:lam_denom}
    x_2^{m-1} - (M^{(0,1)}-H^{(0,1)}) = Y-M^{(0,1)}= \frac{xY^2}{\partial_w \mathcal{A} + xY}.
\end{equation}

Similarly for the numerator:
\begin{equation}\label{pf:lam_num}
x_2^{m-1}+M^{(0,1)}+H^{(0,1)}=Y+M^{(0,1)}= \frac{Y(xY +2\partial_w\mathcal{A})}{\partial_w \mathcal{A} + xY}.
\end{equation}

Using Theorem~1 of~\cite{bouttier2021}, we have, like in discussion of \eqref{pf:3}:
\begin{equation}
        \mathcal{A}(w(s)) = 1-\frac{\chi(s)}{\xi(s)} = 1-w(s)\xi(s).
\end{equation}

So, at least when working in the~{$s$} coordinate,
\begin{equation}
    \partial_w \mathcal{A} = -\xi - w \frac{d\xi}{dw}.
\end{equation}

\begin{lemma}\label{lem:step2.5}
    In the notation of  Theorem~\ref{thm:a02m_solution} we have
    \begin{gather}
        x_1 M^{(0,1)}(w_2,x_1) = \frac{\chi_1 - \chi_2}{w_2(\xi_1 - \xi_2)} - \xi_2; \\
        x_2 M^{(0,1)}(w_2,x_2) = \frac{1}{w_2} \frac{d\chi_2}{d\xi_2} - \xi_2.
    \end{gather}
    The second identity should be understood as a regular value as $s_1 \to s_2$ with a fixed $s_2$. 
        \end{lemma}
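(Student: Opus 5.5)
Both identities come from solving the disk equation for $M^{(0,1)}$ with $w$ and $x$ treated as independent variables. Rewriting \eqref{pf:01eq_exctracted} in terms of $M^{(0,1)}=\mathcal{M}+H^{(0,1)}$ and $Y=x^{m-1}+H^{(0,1)}$ gives
\[
M^{(0,1)}(w,x)=\frac{H^{(0,1)}(x)-\mathcal{A}(w)Y(x)}{1-\mathcal{A}(w)-wxY(x)} .
\]
This formula holds as a formal identity in $w$ and $x$ separately, without restricting to the curve $K(w,x)=0$. Evaluate it at $w=w_2=w(s_2)$ and $x=x_1=x(z(s_1))$, and express every ingredient through the parametrization of Theorem~\ref{thm:a02m_solution}:
\[
\mathcal{A}(w_2)=1-w_2\xi_2,\qquad x_1Y(x_1)=x_1y(z_1)=\xi_1 .
\]
The value $H^{(0,1)}(x_1)=Y(x_1)-x_1^{m-1}$ is also needed. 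The denominator then becomes
\[
1-(1-w_2\xi_2)-w_2\xi_1=w_2(\xi_2-\xi_1).
\]
For the numerator, multiply by $x_1$:
\[
x_1\bigl(H^{(0,1)}-\mathcal{A}Y\bigr)=\xi_1-x_1^m-(1-w_2\xi_2)\xi_1=w_2\xi_1\xi_2-\chi_1 ,
\]
using $x_1^m=\chi_1$. Hence
\[
x_1M^{(0,1)}(w_2,x_1)=\frac{w_2\xi_1\xi_2-\chi_1}{w_2(\xi_2-\xi_1)} .
\]
To reach the stated form, write the numerator $w_2\xi_1\xi_2-\chi_1$ as $-\xi_2\,w_2(\xi_2-\xi_1)+(w_2\xi_2^2-\chi_1)$. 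Since $w_2\xi_2^2=\chi_2$, this splits the fraction into
\[
-\xi_2+\frac{\chi_2-\chi_1}{w_2(\xi_2-\xi_1)}=\frac{\chi_1-\chi_2}{w_2(\xi_1-\xi_2)}-\xi_2 ,
\]
which is the first identity.

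For the second identity, set $s_1=s_2$. The right-hand side of the first identity has removable singularities there: both $\chi_1-\chi_2$ and $\xi_1-\xi_2$ vanish to first order, so their quotient tends to $\chi'(s_2)/\xi'(s_2)=d\chi_2/d\xi_2$. The left-hand side is a formal series in $w_2$ and $x_1^{-1}$, hence continuous in $s_1$, and it specializes to $x_2M^{(0,1)}(w_2,x_2)$. Taking the limit $s_1\to s_2$ on both sides gives the second identity.

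The main obstacle is justifying these substitutions as legitimate operations on formal or convergent series. In the first identity, $w_2$ and $x_1$ are independent, so we must check that $x_1=x(z(s_1))$ can be inserted into series in $x_1^{-1}$ simultaneously with $w_2=w(s_2)$ inserted into series in $w_2$. One must also make sure that dividing by $\xi_1-\xi_2$ causes no trouble. The equation $(1-\mathcal{A}-wxY)M^{(0,1)}=H^{(0,1)}-\mathcal{A}Y$ holds identically, and the factor $\xi_2-\xi_1$ is not identically zero, so the division is fine away from the diagonal. The second identity then follows by continuity of both sides in $s_1$.
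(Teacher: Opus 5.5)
Your proposal is correct and follows the paper's proof essentially step for step. You solve the one-boundary kernel equation $(1-\mathcal{A}-wxY)M^{(0,1)}=H^{(0,1)}-\mathcal{A}Y$ at $(w_2,x_1)$ using $\mathcal{A}(w_2)=1-w_2\xi_2$, $x_1Y(x_1)=\xi_1$ and $x_1^m=\chi_1$, rewrite the result with $\chi_2=w_2\xi_2^2$, and obtain the second identity as the diagonal limit $s_1\to s_2$; your extra remarks on substituting into the product form and cancelling the nonzero factor $w_2(\xi_2-\xi_1)$ go beyond what the paper writes but do not change the argument.
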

    \begin{proof}
        Start from the functional equation \eqref{pf:disk_peeling_m}, rewrite it for~{$M^{(0,1)}$} and in the variables {$x_1$} and~{$w_2$}:
\begin{gather}\label{pf:4}
\left(1-\mathcal{A}(w_2) - w_2x_1Y(x_1)\right)M^{(0,1)}(w_2,x_1)= \\
= H^{(0,1)}(x_1) - \mathcal{A}(w_2)Y(x_1).\notag
\end{gather}
Note that, since~{$\mathcal{A}(w_2) = 1- w_2 \xi_2$} and~{$\xi = xY$}, we have
\begin{equation}\label{pf:5}
\left(1-\mathcal{A}(w_2) - w_2x_1Y(x_1)\right) = w_2 \xi_2 - w_2\xi_1=w_2(\xi_2 - \xi_1).
\end{equation}
Similarly 
\begin{equation}\label{pf:6}
    x_1 \left(H^{(0,1)}(x_1) - \mathcal{A}(w_2)Y(x_1)\right)= w_2 \xi_1 \xi_2 - \chi_1.
\end{equation}
Substituting~\eqref{pf:5} and~\eqref{pf:6} into~\eqref{pf:4}:
\begin{gather}
    x_1 M^{(0,1)}(w_2,x_1) = \frac{-w_2 \xi_1 \xi_2 + \chi_1}{w_2(\xi_1 - \xi_2)} = \mat{\chi_2=w_2 \xi_2^2}= \\
    = \frac{\chi_1 - \chi_2 + w_2 \xi_2^2 - w_2 \xi_1\xi_2}{w_2 (\xi_1 - \xi_2)}= \frac{\chi_1 - \chi_2}{w_2(\xi_1 - \xi_2)} - \xi_2. \notag
\end{gather}
To get the second equality, follow these steps in the $x_2, w_2$ variables and take a limit. 
    \end{proof}

 Next, by direct computation that we omit (starting from the right-hand side and arriving at definition of $Q$), we check that
\begin{equation}\label{pf:Q_trick}
    Q(x_1, x_2, w) = -\partial_{x_2}\left(\frac{x_1^m M^{(0,1)}(x_2, w) - x_2^{m-1}x_1 M^{(0,1)}(x_1, w)}{x_1^m - x_2^m}\right).
\end{equation}
Substituting the results of Lemma~\ref{lem:step2.5} into \eqref{pf:Q_trick} we can check that 
\begin{equation}\label{pf:Q_summand}
    \frac{Q(x_1, x_2, w_2)}{x_2^{m-1}- \mathcal{M}(w_2,x_2)}dx_1 \frac{dw_2}{w_2} = -B_{\chi} + B_{\xi}. 
\end{equation}
    
Now we are ready to prove another lemma.

\begin{lemma}\label{lem:step3}
    When specialized to $m$-constellations and written in the $s$-coordinate we have
    \begin{equation}
        P^{(0,2)}(w_2,x_1)\, dx_1 \frac{dw_2}{w_2} = B_{\xi} - B_s.
    \end{equation}
\end{lemma}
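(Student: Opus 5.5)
The plan is to combine Lemma~\ref{lem:step2} with the two bidifferential identities already in hand. Lemma~\ref{lem:step2} gives
\begin{equation*}
P^{(0,2)}(w_2,x_1) = \lambda\,\mathcal{H}(x_1,x_2) + \mu\, Q(x_1,x_2,w_2).
\end{equation*}
We multiply both sides by $dx_1\,\frac{dw_2}{w_2}$ and treat the two summands separately. The $\mu Q$ summand is exactly the left-hand side of~\eqref{pf:Q_summand}, because $\mu = 1/(x_2^{m-1}-\mathcal{M}(w_2,x_2))$. It therefore contributes $-B_\chi + B_\xi$. What remains is to show that
\begin{equation*}
\lambda\,\mathcal{H}(x_1,x_2)\,dx_1\,\frac{dw_2}{w_2} = B_\chi - B_s .
\end{equation*}
Adding this to the $\mu Q$ contribution gives $B_\xi - B_s$, as claimed.

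For the $\lambda\mathcal{H}$ summand, item~4 of Lemma~\ref{lem:step1} gives $\mathcal{H}\,dx_1dx_2 = \frac1m(B_s-B_\chi)$. The identity above therefore reduces to a one-variable statement in $s_2$:
\begin{equation*}
\lambda\,\frac{dw_2}{w_2} = -m\,dx_2 ,\qquad\text{i.e.}\qquad \lambda = -m\,\frac{w_2\,dx_2}{dw_2}.
\end{equation*}
To check this, I would use the expressions \eqref{pf:lam_num} and \eqref{pf:lam_denom} for the numerator and denominator of $\lambda$. In their ratio the factor $\partial_w\mathcal{A}+xY$ cancels, leaving
\begin{equation*}
\lambda = \frac{x_2\,Y(xY+2\partial_w\mathcal{A})}{xY^2} = \frac{\xi_2 + 2\partial_w\mathcal{A}}{Y}.
\end{equation*}
Next I would substitute $\partial_w\mathcal{A} = -\xi - w\,d\xi/dw$, which gives
\begin{equation*}
\xi + 2\partial_w\mathcal{A} = -\xi - 2w\frac{d\xi}{dw}.
\end{equation*}
Then I would use $w = \chi/\xi^2$ with $\chi = x^m$. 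Logarithmic differentiation gives
\begin{equation*}
\frac{dw}{w} = m\frac{dx}{x} - 2\frac{d\xi}{\xi},
\end{equation*}
so $m\frac{dx}{x} = \frac{dw}{w} + 2\frac{d\xi}{\xi}$. Multiplying through by $\xi$ turns this into $-\xi - 2w\,d\xi/dw = -m\,\xi\,\frac{w\,dx}{x\,dw}$. Since $\xi = xY$, dividing by $Y$ yields $\lambda = -m\,w_2\,dx_2/dw_2$, which is exactly the required identity.

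The main obstacle is bookkeeping of signs and of which variables are held fixed. Lemma~\ref{lem:step2} lives on the curve $K(w_2,x_2)=0$, with $x_1$ independent, and $\partial_w\mathcal{A}$ means the total derivative along $s_2$. I would verify carefully that the ratio of $dx_1dx_2$ to $dx_1\,dw_2/w_2$ is taken with the correct orientation. I would also check the orientation convention of $B_\chi$ versus $B_s$ in item~4 of Lemma~\ref{lem:step1}, in case the overall sign of $\mathcal{H}$ there must flip to make the terms combine. If the sign disagrees, the first thing I would try is rechecking \eqref{pf:Q_summand} against a low-order expansion in $s_1,s_2$ for $m=1$ or $m=2$, $d=1$.
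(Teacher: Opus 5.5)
Your proposal is correct and follows the paper's proof essentially step for step. You split $P^{(0,2)}$ via Lemma~\ref{lem:step2}, take the $\mu Q$ part from \eqref{pf:Q_summand}, and use item~4 of Lemma~\ref{lem:step1} to reduce the $\lambda\mathcal{H}$ part to $\lambda = -m\,w_2\,dx_2/dw_2$, which you derive as the paper does from \eqref{pf:lam_num}, \eqref{pf:lam_denom}, $\partial_w\mathcal{A} = -\xi - w\,d\xi/dw$ and differentiation of $w = x^m/\xi^2$; the sign and orientation worries you flag do not arise, since the contributions combine directly as $(B_\chi - B_s) + (B_\xi - B_\chi) = B_\xi - B_s$.
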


\begin{proof}

    Putting~\eqref{pf:lam_denom} and~\eqref{pf:lam_num} together, we get:
    \begin{gather} 
        \lambda = \frac{\partial_w \mathcal{A} + xy}{y^2}\cdot \frac{y(xy +2\partial_w \mathcal{A})}{\partial_w \mathcal{A}+xy}= \\ 
        \\ = \frac{xy + 2\partial_w \mathcal{A}}{y} = \frac{\xi + 2(-\xi - w\frac{d\xi}{dw})}{\xi}\cdot x = -x-2x\frac{w}{\xi}\frac{d\xi}{dw}. 
    \end{gather}

    To be able to transform the last expression we note that differentiating $w=\frac{x^m}{\xi^2}$ by $w$ gives
    \begin{gather}
        1= \frac{mw}{x} \frac{dx}{dw} - \frac{2w}{\xi} \frac{d\xi}{dw}.
    \end{gather}
    Multiplying my $x$ and substituting into the previous expression we get
    \begin{gather}
        \lambda = -x-2x\frac{w}{\xi}\frac{d\xi}{dw} = -mw \frac{dx}{dw}
    \end{gather}

    Recalling Lemma~\ref{lem:step1}, we see that~{$\lambda \mathcal{H} dx_1 \frac{dw_2}{w_2}=-B_s + B_{\chi}$}.
    
    Adding the already computed second summand, \eqref{pf:Q_summand}, we have proven the lemma.
\end{proof}

\subsubsection{Solving the equation}

Theorem~\ref{thm:FE_g0n2} provides the functional equation for~{$M^{(0,2)}$}, and we specialize it to~{$m$}-constellations. By Lemma~\ref{lem:divisibility} and
\begin{equation}
\left[x^m M^{(0,2)}\right]_{x^{<0}} = x^mM^{(0,2)} - x^{m-1}A^{(0,2)},
\end{equation}
we get the specialization
\begin{gather}\label{eq:solving}
K\,M^{(0,2)} = P^{(0,2)} - w_1x_1^{m-1}A^{(0,2)} - w_1H^{(0,1)}A^{(0,2)}
+ w_1x_1P^{(0,2)}M^{(0,1)} \\
 - w_1P^{(0,2)}A^{(0,1)} + w_1A^{(0,2)}M^{(0,1)} + \mathcal{F}, \notag
\end{gather}
where
\begin{equation}
K = 1 - w_1A^{(0,1)}(w_1) - w_1x_1Y(x),\quad Y(x) = x_1^{m-1} + H^{(0,1)}(x)
\end{equation}
is the same kernel as in the one-boundary problem of~\cite{bouttier2021}. In other words, {$K(x, w(x)) = 0$} and where~{$\mathcal{F} = \mathrm{Term\ f}$} of the equation.

Setting~{$w=w(x)$} in~\eqref{eq:solving}, so that the left-hand side is zero we get 
\begin{gather}
    0= P^{(0,2)}(1+w_1 x_1 M^{(0,1)}-w_1 A^{(0,1)}) - \\
    - w_1A^{(0,2)}(x_1^{m-1} +H^{(0,1)} -  M^{(0,1)}) + \mathcal{F}, \notag
\end{gather}

Rearranging, we get
\begin{gather}
    A^{(0,2)}(w_1, w_2) = \frac{P^{(0,2)}(w_2,x_1)(1+w_1x_1 M^{(0,1)}(w_1,x_1) - w_1 A^{(0,1)}(w_1))+ \mathcal{F}}{w_1(x_1^{m-1 }-\mathcal{M}(w_1,x_1))}.
\end{gather}

We shall consider parts for the above expression for $A^{(0,2)}$ one by one.

In the numerator:
\begin{gather}
    1+w_1 x_1 M^{(0,1)}(w_1, x_1) - \mathcal{A}(w_1)= 1+\frac{w_1}{w_1} \frac{d\chi_1}{d\xi_1} - w_1\xi_1 -1 +w_1 \xi_1 = \frac{d\chi_1}{d\xi_2}.
\end{gather}
Together with Lemma~\ref{lem:step3} this gives
\begin{gather}
    P^{(0,2)}(w_2,x_1)\frac{dx_1}{d\xi_1}=(B_\xi - B_s) \frac{w_2}{dx_1 dw_2}\frac{dx_1}{d\xi_1}=(B_\xi - B_s)\frac{w_2}{d\xi_1 dw_2}.
\end{gather}
In the denominator:
\begin{gather}\label{eq:denominator}
    w_1 (x_1^{m-1} - \mathcal{M}(w_1,x_1))=w_1(Y(x_1) - M^{(0,1)})=\\
    = w_1 \left(\frac{\xi_1}{x_1} - \frac{1}{x_1}\left(\frac{1}{w_1}\frac{d\chi_1}{d\xi_1} - \xi_1\right)\right) =-\frac{1}{x_1}\frac{d\chi_1}{d\xi _1} + 2\frac{w_1\xi_1}{x_1}=-\frac{\xi_1^2}{x_1}\frac{dw_1}{d\xi_1}.\notag
\end{gather}
So the first summand becomes.
\begin{gather}\label{pf:pt1}
    (B_\xi - B_s) \frac{w_2}{dx_1 dw_2}=m(B_\xi - B_s) \frac{1}{ds_1 ds_2}.
\end{gather}

Again in the numerator we have
\begin{gather}
    \mathcal{F}=\frac{w_1^2 w_2}{(w_1-w_2)^2}\,
            \left(M^{(0,1)}(w_1,x_1)-M^{(0,1)}(w_2,x_1)\right)
        - \frac{w_1 w_2^{\,2}}{w_1-w_2}\,
            \frac{\partial M^{(0,1)}}{\partial w_2}(x_1,w_2).
\end{gather}

From Lemma~\ref{lem:step2.5} we obtain further expressions for $M^{(0,1)}(w_2,x_1)$:
\begin{gather}\label{pf:m01xi}
    x_1 M^{(0,1)}(w_2,x_1) = \frac{\chi_1 - \chi_2}{w_2 (\xi_1-\xi_2)} - \xi_2 = \mat{\chi = w\xi^2} = \\
    =\frac{\chi_1 \xi_1^2-w_2 \xi_1 \xi_2 + w_2 \xi_2^2 - w_2 \xi_1\xi_2}{w_2 (\xi_1 - \xi_2)}= \xi_1 + \frac{\xi_1^2 (w_1 - w_2)}{w_2(\xi_1 - \xi_2)}.\notag
\end{gather}
Same for $M^{(0,1)}(w_1,x_1)$:
\begin{gather}
    x_1 M^{(0,1)}(w_1,x_1) = \frac{1}{w_1} \frac{d\chi_1}{d\xi_1} - \xi_1 = \frac{1}{w_1}\left(\frac{\xi_1^2 dw_1}{d\xi_1} + 2w_1\xi_1\right) - \xi_1= \frac{\xi_1^2}{w_1}\frac{dw_1}{d\xi_1} + \xi_1
\end{gather}

Differentiating \eqref{pf:m01xi} w.r.t. $w_2$ we get the other part of $\mathcal{F}$:
\begin{gather}
    \frac{\partial M^{(0,1)}(w_2,x_1)}{\partial w_2} = \frac{\xi_1^2}{x_1} \left( -\frac{w_1}{w_2^2}\frac{1}{\xi_1 - \xi_2} - \frac{w_1 - w_2}{w_2}\frac{1}{(\xi_1-\xi_2)^2} \frac{d\xi_2}{dw_2} \right).
\end{gather}
All together: 
\begin{gather}
    x_1\mathcal{F} = w_1 w_2 \xi_1^2 \left(\frac{1}{(w_1 - w_2)^2} \frac{dw_1}{d\xi_1} + \frac{1}{ (\xi_1 - \xi_2)^2} \frac{d\xi_2}{dw_2}\right). 
\end{gather}

Dividing by \eqref{eq:denominator} we obtain
\begin{equation} \label{pf:pt2}
    \frac{\mathcal{F}}{w_1 (x_1^{m-1}-\mathcal{M}(w_1,x_1))}=-\frac{x_1 \mathcal{F}}{w_1 w_2 \xi_1^2} d\xi_1 dw_2 = B_{\xi}-B_w.
\end{equation}

Putting together \eqref{pf:pt1} and \eqref{pf:pt2} we get

\begin{equation}
A^{(0,2)}(w_1,w_2)\frac{dw_1dw_2}{w_1w_2} = - m (B_{\xi} - B_s) + (B_{\xi}-B_w),
\end{equation}
which is equivalent the theorem statement and the theorem is proven.

\printbibliography

@book{eynard2016counting,
  title={Counting surfaces},
  author={Eynard, Bertrand},
  year={2016},
  publisher={Springer}
}

@misc{bouttier2021,
      title={Enumeration of planar constellations with an alternating boundary}, 
      author={Jérémie Bouttier and Ariane Carrance},
      year={2021},
      eprint={2012.14258},
      archivePrefix={arXiv},
      primaryClass={math.CO},
      url={https://arxiv.org/abs/2012.14258}, 
}

@misc{bouttier2026,
      title={Enumeration of plane hypermaps with a mixed boundary I}, 
      author={Jérémie Bouttier and Bertrand Eynard and Thomas Lejeune},
      year={2026},
      eprint={2608.19947},
      archivePrefix={arXiv},
      primaryClass={math.CO},
      url={https://arxiv.org/abs/2608.19947}, 
}

@misc{baillard2026,
      title={Enumeration of general planar hypermaps with an alternating boundary}, 
      author={Valentin Baillard and Ariane Carrance and Bertrand Eynard},
      year={2026},
      eprint={2603.28571},
      archivePrefix={arXiv},
      primaryClass={math.CO},
      url={https://arxiv.org/abs/2603.28571}, 
}

@article{Kazarian_2015,
   title={Virasoro Constraints and Topological Recursion for Grothendieck’s Dessin Counting},
   volume={105},
   ISSN={1573-0530},
   url={http://dx.doi.org/10.1007/s11005-015-0771-0},
   DOI={10.1007/s11005-015-0771-0},
   number={8},
   journal={Letters in Mathematical Physics},
   publisher={Springer Science and Business Media LLC},
   author={Kazarian, Maxim and Zograf, Peter},
   year={2015},
   month=June, pages={1057–1084} }

\end{document}